\documentclass[11pt]{article}
\usepackage{setspace}
\usepackage[utf8]{inputenc}
\usepackage{geometry}
\usepackage{amsmath}
\usepackage{bbm}
\usepackage{amsthm}
\usepackage{amssymb}
\usepackage{mathtools}
\usepackage{relsize}
\usepackage{tikz-cd}
\usepackage{enumitem}
\usepackage{chngcntr}
\usepackage{mathrsfs}
\usepackage{thmtools}
\usepackage{centernot}
\usepackage{xcolor}
\usepackage{hyperref}
\usepackage{graphicx}
\usepackage{listings}
\usepackage{tikz}
\usepackage{bm}
\usepackage{listings}
\usepackage{cancel}
\usepackage{relsize}
\usepackage{algpseudocode}
\usepackage{outlines}
\usepackage{subcaption}
\usepackage{algorithm}
\usepackage{algpseudocode}
\usepackage{booktabs}
\usepackage{comment}

\usepackage{etoolbox}
\renewcommand{\thealgorithm}{\arabic{algorithm}}

\usepackage{cleveref}
\newtheorem{theorem}{Theorem}[section]
\newtheorem{thm}{Theorem}[section]

\newtheorem{lemma}[thm]{Lemma}

\theoremstyle{remark}
\newtheorem{remark}[thm]{Remark}
\newtheorem{corlly}[thm]{Corollary}

\newtheorem{example}{Example}[section]

\theoremstyle{definition}

\newtheorem{ass}[thm]{Assumption}

\allowdisplaybreaks

\hypersetup{
    colorlinks=true,
    linkcolor=red,
    citecolor=blue,
    urlcolor=blue,
    pdfborder={0 0 0}
}

\DeclareMathOperator{\prob}{\mathbb{P}}
\DeclareMathOperator{\exptn}{\mathbb{E}}
\DeclareMathOperator*{\argmax}{arg\,max}

\newcommand{\bs}[1]{{\boldsymbol #1 }}

\title{Dynamic Core Allocation for Malleable Jobs with \\ Unknown Speed-up Parameters}
\author{S.~A. Bodas, J.~L. Dorsman, M. Mandjes, L. Ravner}
\date{\today}

\numberwithin{equation}{section}
\begin{document}
\maketitle

\begin{abstract}
\noindent
We study dynamic resource allocation in a multicore computing system with a fixed number of processing cores and a stream of {\it malleable} jobs. Each job may adjust its level of parallelism during execution, allowing adaptive redistribution of resources across concurrently active jobs. Jobs belong to one of two observable classes, each characterized by a distinct speed-up function with unknown parameters. The objective is to learn a core-allocation policy that minimizes the long-run mean number of jobs in the system, equivalently the mean response time in steady state.

\medskip

\noindent
To address this uncertainty, we develop an iterative learning-and-control framework. The system alternates between estimating the unknown speed-up parameters from observed job completions and solving the associated Markov decision process (MDP) to update the allocation policy. Within each job class, cores are shared equally among active jobs; the fraction of capacity assigned to each class is obtained from the MDP formulation of \cite{berg2017}, evaluated at the current parameter estimates. We construct a maximum likelihood estimator based on state-dependent inter-departure times and prove its strong consistency under a fixed allocation policy. We further propose two learning algorithms that combine this estimation step with dynamic programming-based policy updates, and illustrate their  through numerical experiments.

\medskip

\noindent
{\sc Keywords.} Dynamic core allocation $\circ$ Malleable jobs $\circ$ Unknown speed-up parameters $\circ$ Maximum likelihood estimation $\circ$ Markov decision processes $\circ$ Queueing

\medskip

\noindent
SAB and JLD are with Korteweg-de Vries Institute for Mathematics, University of Amsterdam, Amsterdam, The Netherlands.
MM is with the Mathematical Institute, Leiden University, The Netherlands; MM is also affiliated with (a)~Korteweg-de Vries Institute for Mathematics, University of Amsterdam, Amsterdam, The Netherlands, (b)~E{\sc urandom}, Eindhoven University of Technology, Eindhoven, The Netherlands, and (c)~Amsterdam Business School, Faculty of Economics and Business, University of Amsterdam, Amsterdam, The Netherlands.
LR is with Department of Statistics, University of Haifa, Israel.

\medskip

\noindent
SAB's, JLD's and MM's research has been funded by the European Union's Horizon 2020 research and innovation programme under the Marie Sklodowska-Curie grant agreement no.\ 945045 \includegraphics[height=1em]{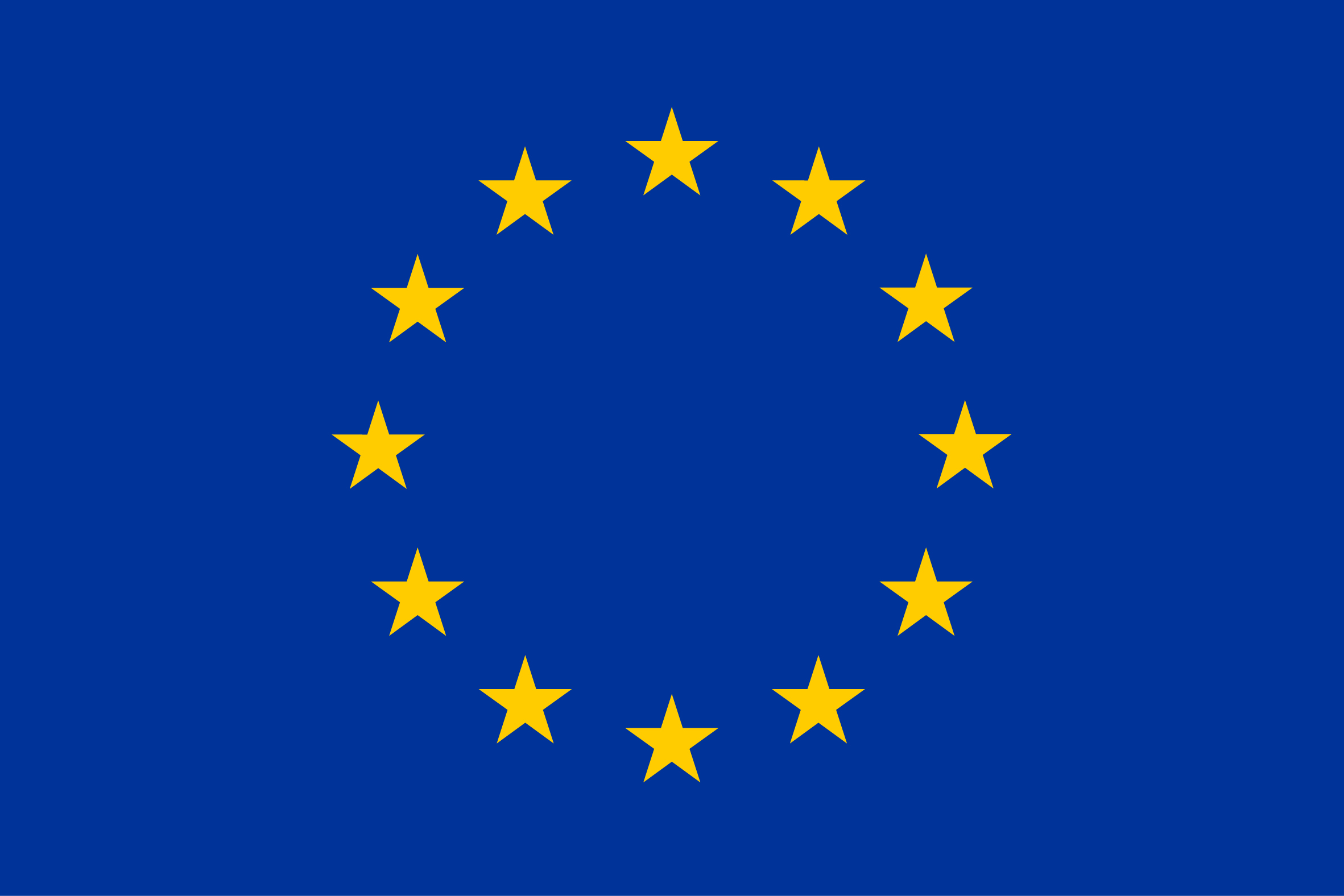}, and by the NWO Gravitation project {\sc Networks} under grant agreement no.\ 024.002.003. LR's research was supported by the Israel Science Foundation (ISF), grant no.\ 1361/23.
\end{abstract}

\newpage

\section{Introduction} \label{section: Introduction}

This paper is concerned with the \emph{core-allocation problem}, which arises in multi-core computing systems where jobs can be processed in parallel by multiple cores or servers. Since jobs are often parallelizable, assigning additional cores to a job can reduce its completion time. In practice, however, parallelization is rarely perfect: the reduction in response time is typically less than proportional to the number of allocated cores. In other words, the \emph{speed-up} obtained from parallel processing exhibits diminishing returns as more cores are assigned. This phenomenon was already recognized by Amdahl in 1967 \cite{amdahl1967} and gives rise to a fundamental trade-off. On the one hand, allocating more cores to a job accelerates its completion; on the other hand, the resulting gains become progressively smaller, leading to less efficient use of computational resources. To study this trade-off, the degree of parallelizability of a job is commonly described through a \emph{speed-up function}, which specifies the processing speed achieved as a function of the number of allocated cores.

The core allocation problem has been studied extensively due to its broad range of applications. Beyond traditional computing platforms, such as personal devices equipped with multi-core processors, it also arises in modern large-scale systems including data centers, supercomputers, and database infrastructures. Consequently, the problem has attracted considerable attention across several research communities and has been investigated under a variety of modelling assumptions and performance objectives. For a comprehensive overview of these communities and their contributions, we refer the reader to \cite[Chapter 3]{berg2022}. Without attempting to provide an exhaustive survey, we review below the strands of literature most relevant to the present study.

A major stream of this literature considers systems with \emph{moldable} jobs. In such systems, a job may be assigned any number of cores upon arrival, but this allocation remains fixed throughout its execution. This assumption is well suited to many traditional computing environments and has been investigated both empirically \cite{cirne2001,cirne2002,huang2013} and analytically \cite{berg2017}.

In this paper, however, we consider the more challenging setting of \emph{malleable} jobs, for which the number of allocated cores may change dynamically during a job's lifetime. While this additional flexibility creates greater opportunities for performance optimization, it also significantly complicates the allocation problem. Under the assumptions of identical cores, exponentially distributed service requirements, and a common speed-up function for all jobs, \cite{berg2017} showed that the EQUI policy (cf.\ \cite{edmonds2000}), which continuously divides the available cores equally among all jobs, minimizes the mean response time.

Once any of these assumptions are relaxed, identifying the optimal allocation policy becomes considerably more difficult. For example, when service requirements are not exponentially distributed, \cite{berg2020} studies the optimal allocation problem only in a finite-horizon setting in which all jobs are present initially and no further arrivals occur. Similarly, in systems with multiple job classes exhibiting different speed-up functions, \cite{berg2017} formulates the problem as a Markov decision process and identifies a class of near-optimal policies. To the best of our knowledge, exact analytical characterizations of optimal policies are otherwise available only in certain asymptotic regimes \cite{berg2024,li2026} or under highly restrictive assumptions on the speed-up functions.
A common feature of all these studies is that the relevant model parameters are assumed to be known a priori. In contrast, the present paper departs from this assumption, as we explain below through the example of a key recent application area for malleable-job scheduling in large-scale machine learning training.

The growing popularity of AI and large language models has generated substantial demand for GPU computing power (see, e.g., \cite{mckinsey2026}).
The amount of computing power needed is typically so large that users do not maintain their own GPU cluster, but instead make use of cloud-based GPU computing services such as those offered by Amazon \cite{amazon2023}, Google \cite{google2024} and Microsoft \cite{microsoft_article}, which operate large GPU clusters to make this operation economically viable. As pointed out in \cite{li2024}, the core-allocation problem effectively arises in this setting, as it is up to the user to decide how many GPUs to `rent' at any given point in time. Indeed, these GPUs serve machine learning training jobs in a parallelized fashion, yet subject to diminishing returns. In this setting, it is unnatural to assume that the speed-up function of a job is fixed over time. As argued in \cite{li2026}, the speed-up gained by a job as a result of being parallelized over a set number of cores may vary as the training progresses. Furthermore, the speed-up is also dynamic because of external effects. For example, cloud providers continuously upgrade their GPU infrastructure, which alters compute throughput, memory bandwidth, and interconnect latency in ways that directly change how efficiently a job scales across GPUs \cite{lym2019, fernandez2024}. This effect may moreover differ across job classes, as distinct workload types may scale differently on new hardware. Crucially, the user has no direct visibility into these external factors. Therefore, the often-made modelling assumption that speed-up functions are known up-front and constant over time is not realistic. The parameters of the speed-up function will need to be learned based on the system's behavior, which presents a gap in the current literature.

In this paper, we address this gap by extending the malleable-job model of \cite{berg2017} with a learning mechanism that estimates the speed-up parameters governing job parallelization. The resulting framework alternates between operational and learning phases. During each operational phase, cores are allocated according to an {\sc equi}-type policy within each job class, while the fraction of cores assigned to each class is determined by solving the MDP formulation of \cite{berg2017} using the current parameter estimates. Subsequently, a learning step updates these estimates based on the observed system dynamics.
The learning component is based on maximum likelihood estimation using observations of the departure process. Exploiting the Markovian structure of the arrival and service processes, we derive a closed-form expression for the distribution of state-dependent inter-departure times, which in turn yields an explicit likelihood function for the observed departures.
Our approach relates to a growing body of literature on statistical inference in queueing systems, where unknown model parameters are estimated from operational data; see the survey~\cite{asanjarani2021survey}. This literature encompasses classical estimation methods for arrival and service processes under complete or partial observations~\cite{larson1990queue, ross2007estimation}, Bayesian inference for Markovian queueing models \cite{armero1994bayesian}, and techniques designed for settings with incomplete or aggregated data, where service times or internal system states are not directly observable \cite{basawa2008, whitt2012fitting}. More recent work has focused on estimating arrival-rate and patience-time parameters in queues with abandonment \cite{bodas2023, inoue2023}, as well as service-time parameters in systems with impatient customers \cite{podorojnyi2026estimating}.
Despite this rich body of work, the estimation of parallelization characteristics in multicore systems with malleable jobs has received comparatively little attention. In particular, existing studies on core-allocation policies typically assume that the speed-up function is known a priori, whereas in practice its parameters are often uncertain and must be inferred from data. This observation motivates the learning-based framework developed in this paper.

The main contributions of this paper can be summarized as follows. First, we extend the classical malleable-job core-allocation model by allowing the parameters of the speed-up function to be unknown and learned from operational data. Second, we develop two learning algorithms that integrate maximum-likelihood-based parameter estimation with the dynamic implementation of the optimal core-allocation policy. The algorithms differ in the amount of data incorporated at each estimation step. For a fixed allocation policy, we establish strong consistency of the resulting parameter estimates. Third, through an extensive numerical study, we evaluate the practical performance of the proposed methods and demonstrate that accurate parameter estimates and near-optimal system performance can be achieved after relatively short observation periods.

While motivated by recent advances in AI and machine learning, the framework developed in this paper applies more broadly to a variety of resource-allocation settings. To streamline the presentation, we use the term `cores' throughout the paper to denote the allocatable processing resources, whether these correspond to CPU cores, servers, or GPUs. Likewise, we use the term `system' to refer generically to the underlying processing environment, such as a multicore processor, data center, or GPU cluster.

The remainder of this paper is organized as follows. In Section \ref{section: model description}, we introduce the model and formulate the core-allocation problem. Section~\ref{section: estimation procedure} presents the parameter-estimation methodology and establishes its convergence properties. In Section \ref{section: optimal policy}, we describe how the estimated parameters are incorporated into the optimal allocation policy. Section \ref{section: numerical experiments} reports numerical experiments that illustrate the performance of the proposed algorithms and their convergence behavior. Finally, Section~\ref{section: conclusion} concludes and outlines several directions for future research.

\section{Model description, objective, and learning algorithm}\label{section: model description}

In this section we discuss the model under study, introduce our objective function, and specify our learning algorithm.

\subsection{Model description}

We consider a data center consisting of $c>1$ identical cores.
Jobs arrive according to a Poisson process with known rate $\lambda$, and their inherent sizes are independent and exponentially distributed with known rate $\mu>0$. The inherent size of a job, denoted by the generic random variable $X$, represents the amount of service required when the job is processed by a single core operating at unit speed.

Jobs are assumed to be \emph{parallelizable}, meaning that their processing capacity can be distributed across multiple cores. We assume that each core provides service at unit rate. Consequently, although a job requires an amount of work $X$, its response time may be substantially smaller than $X$ when it is processed simultaneously by multiple cores. This reduction is captured through a \emph{speed-up} effect resulting from parallel execution.

To model this effect, let $z \in [0,c]$ denote the amount of processing capacity allocated to a job. The resulting service rate is given by a speed-up function $s(z;p)$, where $p \in (0,1)$ is an unknown parameter characterizing the degree of parallelizability of the job. We assume that
\begin{itemize}
\item[(a)] $s(z;p)=z$ for $0 \leq z \leq 1$; and 
\item[(b)] $s(\cdot;p)$ is nondecreasing and concave on $(1,c]$.
\end{itemize}
These assumptions are consistent with standard models of diminishing returns in parallel computing and multicore scheduling; see, e.g., \cite{berg2017,hill2008}.

The linear behavior on the interval $[0,1]$ has a natural interpretation. When $z=0$, a job receives no service, whereas $z=1$ corresponds to processing on a single core at its nominal rate. For $0<z<1$, the job receives only a fraction $z$ of a core's capacity. Since the job is effectively processed by less than one core, no parallelization occurs and therefore no speed-up effects arise. In this regime, it is therefore natural to assume that the service rate scales proportionally with the allocated capacity, yielding $s(z;p)=z$.

This formulation is particularly natural in the malleable-job setting considered in this paper, where capacity is continuously redistributed according to EQUI-type allocation policies \cite{berg2017}. Under such policies, jobs receive fractions of the total service capacity rather than fixed sets of discrete cores, making the continuous-capacity interpretation both convenient and consistent with the underlying scheduling model.

{It is assumed that there are two classes of jobs, each of which captures different speed-up characteristics. That is, each job is assumed to} belong to one of two classes, labeled $1$ and $2$, with associated parameters $p_1$ and $p_2$. In particular, upon arrival, a job turns out to be of class-1 with probability $\alpha\in(0,1)$; otherwise it is of class-2. While the framework extends naturally to more than two classes (see Sections~\ref{section: estimation procedure} and \ref{section: optimal policy}), we restrict attention to two for clarity.
Accordingly, for job class-$i$ for $i \in \{1,2\}$,
the speed-up function is given by
\[
s_i(\cdot\,;\cdot) : [0,c] \times [0,1] \to \mathbb{R}_+,
\]
where the first argument denotes the number of allocated cores and the second specifies the speed-up parameter.
In particular, this means that the family of speed-up functions need not be the same across the job classes.
As a result, $s_i(z;p_i)$ represents the multiplicative increase in service rate when a class-$i$ job with speed-up parameter $p_i$ is allocated $z$ cores. Throughout, we assume that $p_1$ and $p_2$ are unknown, while job classes are observable upon arrival.
In the rest of the paper, $s(\cdot;\cdot)$ will denote a generic speed-up function, and $s_i(\cdot;\cdot)$ will denote a speed-up function corresponding to a job of class-$i$. 
Here are two examples of potential speed-up functions.

\begin{example} \label{example: speed up: amdahl law}
A classical model of limited parallelizability is given by {\it Amdahl’s law}, {see e.g.\ \cite{amdahl1967, hill2008}}. Each job has a fraction $p \in [0,1]$ of work that can be parallelized, while the remaining fraction $1-p$ must be executed sequentially. The resulting speed-up when allocating $z$ cores is
\begin{align*}
s(z;p) &= 
\begin{cases}
    \Big( (1-p) + \frac{p}{z} \Big)^{-1}, \ &z \geq 1,
    \\
    z, \ &z < 1.
\end{cases}
\end{align*}
Under this model, the achievable speed-up is bounded above by $1/(1-p)$, even as $z \to \infty$, reflecting the inherent sequential bottleneck.
\end{example}

\begin{example} \label{example: speed up: 2}
    A parametric family of speed-up functions that captures diminishing returns is
    \begin{align*}
        s(z;p) = 
        \begin{cases}
            \max\{z,1\}^{p}, \ &z \geq 1,
            \\
            z, \ &z \leq 1.
        \end{cases}
    \end{align*}
\end{example}

Figure \ref{fig:speed up functions} presents the plots corresponding to Examples \ref{example: speed up: amdahl law} and \ref{example: speed up: 2}.

\begin{figure}[htbp]
    \centering
    \begin{subfigure}{0.45\textwidth}
        \centering
        \includegraphics[width=\linewidth]{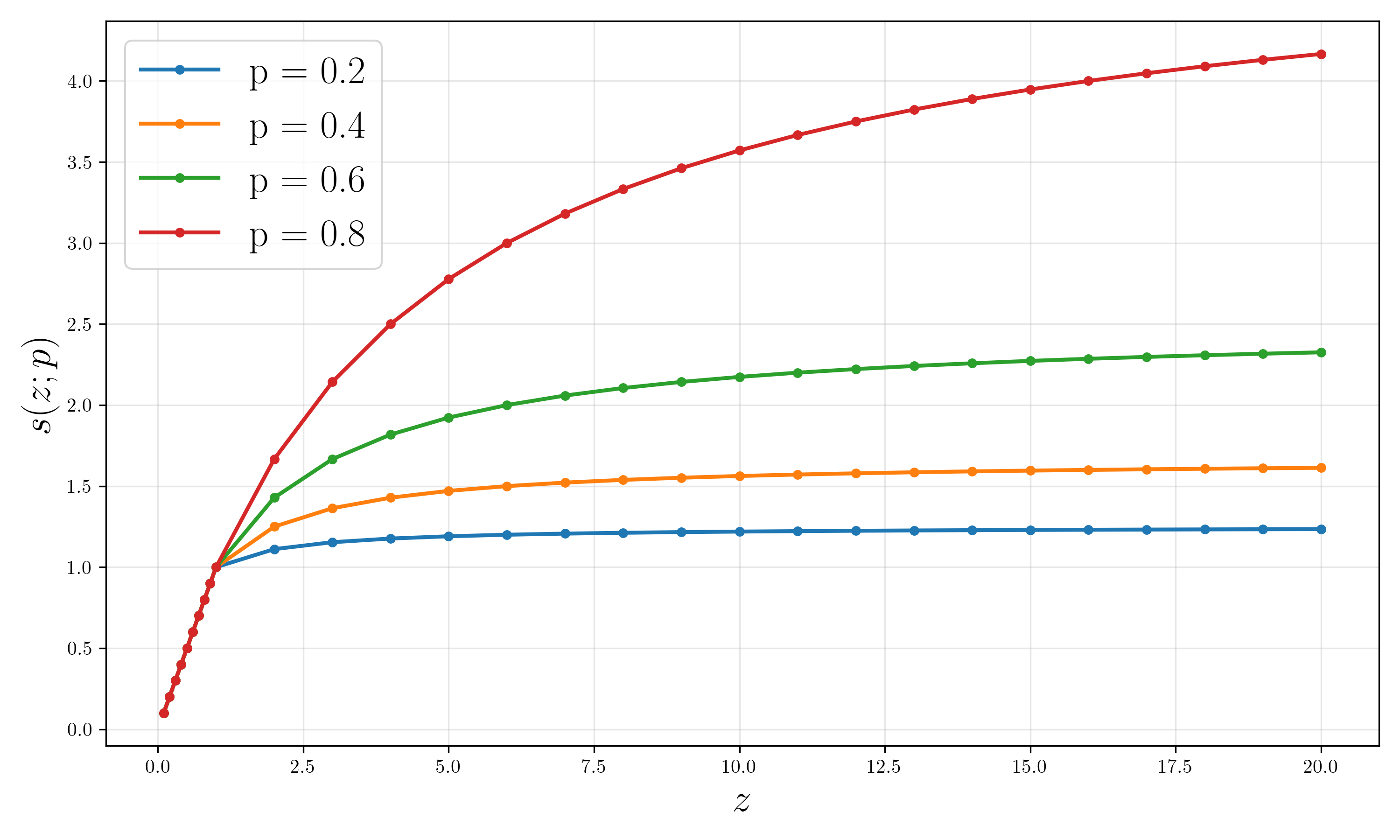}
        \caption{$s(z;p)$ versus $z$ for Example \ref{example: speed up: amdahl law}}
        \label{fig: Speed up 1}
    \end{subfigure}
    \hfill
    \begin{subfigure}{0.45\textwidth}
        \centering
        \includegraphics[width=\linewidth]{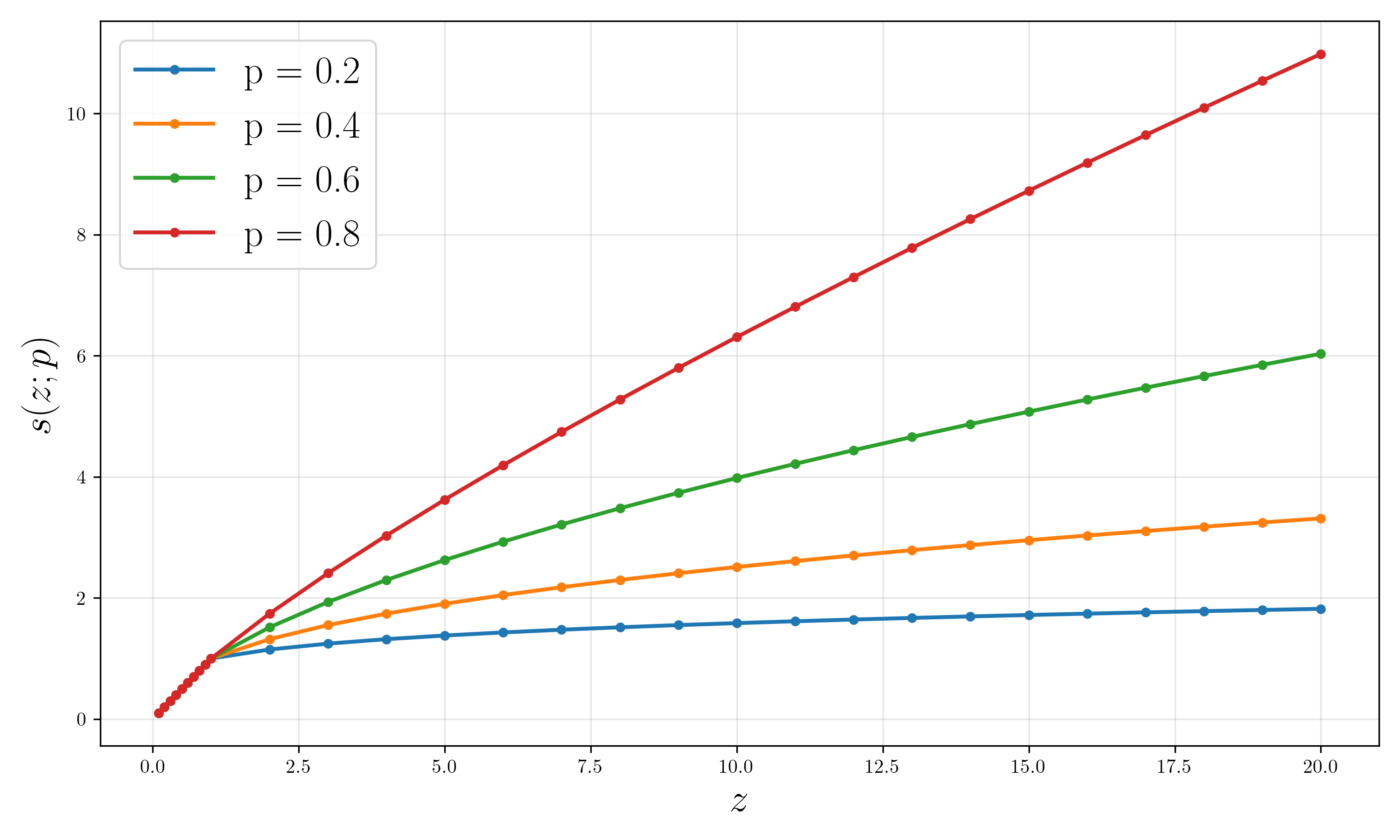}
        \caption{$s(z;p)$ versus $z$ for Example \ref{example: speed up: 2}}
        \label{fig: Speed up 2}
    \end{subfigure}
    \caption{Plots of example speed-up functions}
    \label{fig:speed up functions}
\end{figure}

\begin{remark}
Note that in Example~\ref{example: speed up: amdahl law} as well as in Example~\ref{example: speed up: 2}, $p=0$ corresponds to the case in which jobs are not parallelizable, since $s(z;0)=1$ implies that there is no gain from assigning more than one core to a job. In contrast, $p=1$ yields $s(z;1)=z$ for $z>1$, corresponding to a linear speed-up in the number of allocated cores. In practice, $p$ is expected to lie strictly between these extremes, reflecting positive but diminishing marginal gains from additional cores.\hfill $\spadesuit$
\end{remark}

For a detailed discussion on speed-up functions, we refer the reader to \cite{berg2017}. Next we state an assumption that we impose on the speed-up functions, the reason for which is discussed in Remark \ref{remark:lipschitz} below.

\begin{ass}\label{ass:Lipschitz speed up}
    For every fixed $z \in [0,c]$, the function $s_i(z;\cdot): [0,1] \mapsto \mathbb{R}_+$ is a Lipschitz function, i.e., for all $p', p'' \in [0,1]$ and $i \in \{1,2\}$ there exists $\kappa > 0$ such that $\big\vert s_i(z;p'') - s_i(z;p') \big\vert \leq \kappa \big\vert p''-p' \big\vert$.
\end{ass}

\begin{remark}[On Assumption \ref{ass:Lipschitz speed up}]\label{remark:lipschitz}
Lipschitz continuity of the speed-up function in the parameter $p$ is a mild regularity condition satisfied by standard parametric models. For example, both Amdahl-type speed-up functions and power-law models of the form $s(z;p)=\max\{z,1\}^p$ are Lipschitz in $p$ on compact domains of $z$ (see below). More generally, this assumption ensures that small changes in $p$ induce controlled variations in the effective service rate, reflecting the smooth variation of job characteristics in practice. It is also instrumental in the proof of Theorem~\ref{theorem:strong consistency}, that states that our proposed estimators for the speed-up parameters are consistent. Importantly, the assumption does not restrict qualitative behavior, but merely rules out pathological cases in which arbitrarily small perturbations in $p$ lead to large changes in processing rates. \hfill $\spadesuit$
\end{remark}

We verify Assumption~\ref{ass:Lipschitz speed up} for Examples~\ref{example: speed up: amdahl law} and \ref{example: speed up: 2}. 
Notice first that in both examples, when $z \leq 1$, we have $s_i(z;p') = s_i(z;p'') = z$ for all $p', p'' \in [0,1]$, so $\big\vert s_i(z;p'') - s_i(z;p') \big\vert = 0$, trivially satisfying the condition.
Therefore, let us assume $z > 1$ which is the more interesting case.

In both cases, the speed-up function $s(z;\cdot): [0,1] \to [1,\infty)$ is continuous on $[0,1]$ and differentiable on $(0,1)$. By the mean value theorem, it suffices to show that there exists $\kappa>0$ such that $\big\lvert \nabla_p s(z;p) \big\rvert \le \kappa$ for all $p \in (0,1)$, uniformly in $z$. This constant $\kappa$ then satisfies Assumption~\ref{ass:Lipschitz speed up}. In particular, 
\begin{align*}
    &\text{Example } \ref{example: speed up: amdahl law}:  \nabla_p s(z;p) = \frac{1 - \frac{1}{z}}{\big(1 - p\big(1 - \frac{1}{z}\big)\big)^2} \leq \frac{1 - \frac{1}{z}}{\big(1 - \big(1 - \frac{1}{z}\big)\big)^2} = \frac{1 - \frac{1}{z}}{\big(\frac{1}{z}\big)^2} = z^2 - z \leq c^2 - c =: \kappa.
    \\
    &\text{Example } \ref{example: speed up: 2}: \nabla_p s(z;p) = z^p \log z \leq z \log z 
    \leq c \log c =: \kappa.
\end{align*}

\subsection{Objective}

In this paper, a {\it core allocation policy} is a function
\begin{align*}
\pi: \mathbb{Z} \times \mathbb{Z} \to [0,1],
\end{align*}
which maps the state $\big(n^{(1)},n^{(2)}\big)$, where $n^{(i)}$ denotes the number of class-$i$ jobs in the system, to the fraction $\pi\big(n^{(1)},n^{(2)}\big)$ of cores allocated to class-1 jobs. The remaining fraction $1-\pi\big(n^{(1)},n^{(2)}\big)$ of cores are assigned to class-2 jobs.
Within each class, cores are divided equally among the active jobs: for $n^{(1)},n^{(2)}>0$, each class-1 job receives $c\,\pi\big(n^{(1)},n^{(2)}\big)/n^{(1)}$ cores, and each class-2 job receives $c\big(1-\pi\big(n^{(1)},n^{(2)}\big)\big)/n^{(2)}$ cores. Policies of this form belong to the category {\sc equi} (equal sharing) policies.
It is shown in \cite{berg2017} that {\sc equi} is optimal for minimizing the mean number of jobs in the system when all jobs share a common speed-up curve and have exponentially distributed service requirements, as is the case in our model.

In this paper, our objective is to learn a policy $\pi^*$ that minimizes the mean response time  of jobs in the system.
By Little’s law, this is equivalent to minimizing the mean number of jobs in steady state.
Let $\Pi$ denote the category of policies that  prevents starvation of either job class, and ensures that the system is a regenerative process.
For a policy $\pi \in \Pi$, let $N_{\pi}^{(1)}$ and $N_{\pi}^{(2)}$ denote the stationary numbers of class-1 and class-2 jobs in the system. The optimal policy is then defined as
\begin{align*}
\pi^* := \arg\min_{\pi \in \Pi} \mathbb{E}_{\pi}\Big[N_{\pi}^{(1)} + N_{\pi}^{(2)}\Big].
\end{align*}

\subsection{Learning algorithm}\label{subsection: model description: learning algorithm}
In this subsection, we provide a high-level overview of the algorithm developed in this paper for learning $\pi^*$. 
For $k \geq 1$, it identifies the policy $\pi_k$ that is run between times $\widetilde T_{k-1}$ and $\widetilde T_k$ (here, $\widetilde{T}_0 := 0$), for a duration $T_k:= \widetilde T_k-\widetilde T_{k-1}$. 
The sequence $\big\{\widetilde{T}_k\big\}_{k \geq 1}$ is a sequence of random variables whose construction is described in Section \ref{section: numerical experiments}.
In the algorithm below $n_{\rm steps}$ denotes the number of times we adapt the policy.
\begin{algorithm}[H]
\caption{High-level solution outline}
\label{alg: high level outline}
\begin{algorithmic}[1]
\State Given: $\lambda, \mu, c, s_1(\cdot;\cdot), s_2(\cdot;\cdot)$, sequence $\big\{\widetilde{T}_k\big\}_{k \geq 1}$, $\pi_1 \in \Pi$;
\For{$k = 1,2,\dots, n_{\text{steps}}$}
\State Run the system under policy $\pi_k$ during $\big[\widetilde{T}_{k-1}, \widetilde{T}_k\big)$;
\State Store all arrival and departure times in $\bs{Z}_k$;
\State Using a MLE procedure on $\bs{Z}_k$, obtain estimators $\big(\widehat{p}_{1,k+1}$,$\,\widehat{p}_{2,k+1}\big)$ for $(p_1, p_2)$;
\State Solve Bellman optimality equation using $\big(\widehat{p}_{1,k+1}$,\,$\widehat{p}_{2,k+1}\big)$ to obtain policy $\pi_{k+1}$;
\EndFor
\State Output: $\pi_{n_{\rm steps}+1}$;
\end{algorithmic}
\end{algorithm}

For any fixed parameter values, the optimal core allocation policy can be obtained by solving the Bellman optimality equation associated with the corresponding Markov decision process (MDP), obtained by modelling the system in discrete time. This is detailed in Section~\ref{section: optimal policy}. Hence, the central challenge of this paper is to accurately estimate the unknown parameters $(p_1,p_2)$, since these estimates directly determine the computed optimal policy.

Algorithm~\ref{alg: high level outline} implements this idea in an iterative learning-and-optimization loop. The system is run over successive time intervals under the current policy $\pi_k$, and the resulting sample path data is used to update parameter estimates via a maximum likelihood estimation (MLE) procedure (see Section~\ref{section: estimation procedure}). These updated estimates $\big(\widehat{p}_{1,k+1}, \widehat{p}_{2,k+1}\big)$ are then used to recompute the optimal policy by solving the corresponding Bellman equation, yielding $\widehat{\pi}_{k+1}$.
By an appropriate choice of the update times $\big\{\widetilde{T}_k\big\}_{k \ge 1}$ which is discussed in Section \ref{section: numerical experiments}, the estimators are progressively refined and converge to the true parameters $(p_1,p_2)$, leading to increasingly accurate policy updates. 
Section~\ref{section: numerical experiments} presents the full learning procedure in detail along with numerical experiments demonstrate that the approach works in practice.

\section{Estimation procedure} \label{section: estimation procedure}

In this section, we provide a full description of the estimation procedure for the speed-up parameters $p_1$ and $p_2$. The method is based on a maximum likelihood approach applied to the inter-departure times of jobs.
The key observation we rely on is that the departure process of jobs, of both classes, is a state-dependent Poisson process, with a rate determined by the current system state and the allocation policy.

For $i \in \{1, 2\}$, consider the departure process of class-$i$ jobs.
Fix two consecutive departure epochs of class-$i$ jobs and consider the interval between them. By construction, no class-$i$ departures occur in this interval, but arrivals of both job classes and departures of the other job class may occur.
The system state is fully described by $\big(n^{(1)}, n^{(2)}, c^{(1)}\big)$, where $n^{(i)}$ is the number of class-$i$ jobs and $c^{(i)}$ the total number of cores allocated to them (We do not need to describe $c^{(2)}$ because it follows automatically; it is precisely $c - c^{(1)}$). 
Conditional on this state, the instantaneous departure rate of class-$i$ jobs is
\begin{align}\label{eqn:departure_rate}
    \delta^{(i)} = 
    \begin{cases}
        n^{(i)} \mu\, s_i\big(\varphi^{(i)}; p_i\big), &n^{(i)} > 0,
        \\
        0, &n^{(i)} = 0,
    \end{cases}
\end{align}
where $\varphi^{(i)} := c^{(i)}/n^{(i)}$ corresponds to the number of cores allocated to each of the $n^{(i)}$ jobs.
Equation \eqref{eqn:departure_rate} can be understood as follows:
\begin{itemize}
    \item when $n^{(i)} = 0$, there can be no class-$i$ departures, and the departure rate is 0. Otherwise,
    \item when $\varphi^{(i)} \geq 1$, each of the $n^{(i)}$ jobs gets a speed-up $s_i\big(\varphi^{(i)}; p_i\big)$. Equation \eqref{eqn:departure_rate} then follows because the departure rate is the rate of a minimum  of $n^{(i)}$ independent $\text{Exp}\Big(\mu \, s_i\big(\varphi^{(i)}; p_i\big)\Big)$ random variables. 
    \item when $\varphi^{(i)} < 1$, there should be no speed-up. Since all servers work at max speed, the departure rate is $c^{(i)}\mu$. Equation \eqref{eqn:departure_rate} in this case also simplifies to $n^{(i)}\,\mu\,\varphi^{(i)} = c^{(i)}\mu$.
\end{itemize}

We assume the allocation policy is known and deterministic, so that after each event of type (arrival or departure of either class), the updated allocation $c^{(i)}$ is uniquely determined by the new system state. Consequently, the pair $\big(n^{(i)},c^{(i)}\big)$, and hence the departure rate, is known at all times. 
It follows that the inter-departure interval can be decomposed into subintervals on which $\big(n^{(i)},c^{(i)}\big)$ is constant, hence the departure rate remains constant on each subinterval. 
Therefore, for a given sequence of departure and arrival events, the likelihood of a class-1 job inter-departure time is given by a product of probabilities that no event occurred during the relevant subinterval and the instantaneous exponential density at the actual departure time. 
This form resembles an inhomogeneous Poisson process with piecewise constant rates.
In the remainder of the section, we mathematically formalize the above intuitive description and construct the maximum likelihood estimation procedure to estimate $p_1, p_2$, using the state-dependent departure rates. We then state and prove a theorem which assures strong consistency of the maximum likelihood estimators.

\begin{remark}\label{rem:dep_estimators}
     Note that while the estimation procedures of both parameters $(p_1,p_2)$ are dependent as they rely on the same data, they can be seen as parallel processes with an identical construction. In particular, the following consistency analysis applies to each estimator separately. In fact, this procedure can be extended to any number of job classes by considering the departure process of every class individually. \hfill $\spadesuit$
\end{remark}

Once again, let $i \in \{1,2\}$ denote the job class. 
Define $\widetilde{D}^{(i)}_0 := 0$, and let $\Big\{\widetilde{D}^{(i)}_j\Big\}_{j \ge 1}$ be the departure epochs of class-$i$ jobs, with inter-departure times
\[
D^{(i)}_j := \widetilde{D}^{(i)}_{j} - \widetilde{D}^{(i)}_{j-1}, \ j \geq 1.
\]
Every such inter-departure interval can be split into subintervals corresponding to \textit{`rate changing events'}. 
Specifically, for $j \geq 1$, let
\[\Big\{E^{(i)}_{j,k}\Big\}_{k=1}^{\mathscr N^{(i)}_{j}}\] denote the event times (arrivals of any job class or departures of the non-$i$ job class), where $\mathscr N^{(i)}_{j}$ is the total number of events occurring in $\Big(\widetilde{D}^{(i)}_{j-1}, \widetilde{D}^{(i)}_{j}\Big)$. 
Then,
\[
\left[\widetilde{D}^{(i)}_{j-1},\widetilde{D}^{(i)}_{j}\right)= \underbrace{\left[\widetilde{D}^{(i)}_{j-1}, E^{(i)}_{j,1}\right)}_{\text{Interval 1}} \cup \underbrace{\left[E^{(i)}_{j,1}, E^{(i)}_{j,2}\right)}_{\text{Interval 2}} \cup  \ldots \cup \underbrace{\left[E^{(i)}_{j,\mathscr N^{(i)}_{j}}, \widetilde{D}^{(i)}_{j}\right)}_{\text{Interval } \mathscr N^{(i)}_j+1}.
\]
Further for $k \in \big\{1, \cdots, \mathscr{N}^{(i)}_j+1\big\}$, let $\Big(n^{(i)}_{j,k}, c^{(i)}_{j,k}\Big)$ be the number of class-$i$ jobs and allocated cores in the $\big(\mathscr N^{(i)}_j+1\big)$ intervals listed above.
Understand that this pair remains fixed within each of these intervals.
Figures \ref{fig:departure times} and \ref{fig:event times} assist the reader in visualizing these notations.

\begin{figure}[htbp]
    \centering
    \begin{tikzpicture}[>=stealth, thick]
    
    \draw[->] (0,0) -- (16,0);

    \draw (1,0.2) -- (1,-0.2);
    \node[below] at (1,-0.25) {0};
    
    \foreach \x/\label in {
        2/$\widetilde{D}^{(1)}_1$,
        6/$\widetilde{D}^{(1)}_2$,
        7/$\widetilde{D}^{(1)}_3$,
        11.5/$\widetilde{D}^{(1)}_4$,
        15/$\widetilde{D}^{(1)}_5$
    }
    {
        \filldraw[blue] (\x,0) circle (2.3pt);
        \node[above, blue] at (\x,0) {\label};
    }
    
    \foreach \x/\label in {
        3/$\widetilde{D}^{(2)}_1$,
        5/$\widetilde{D}^{(2)}_2$,
        9/$\widetilde{D}^{(2)}_3$,
        13/$\widetilde{D}^{(2)}_4$
    }
    {
        \filldraw[red] (\x,0) circle (2.3pt);
        \node[below, red] at (\x,0) {\label};
    }
    
    \end{tikzpicture}
    \caption{Departure epochs of class-1 (blue) and class-2 (red) jobs on a single timeline.}
    \label{fig:departure times}
\end{figure}

\begin{figure}[htbp]
\centering
    \begin{tikzpicture}[>=stealth, thick]
    
    \draw[->] (0,0) -- (16,0);
    
    \draw[red] (1,0.2) -- (1,-0.2);
    \node[below, red] at (1,-0.1) {$\widetilde{D}^{(i)}_{j-1}$};
    
    \draw[red] (15,0.2) -- (15,-0.2);
    \node[below, red] at (15,-0.1) {$\widetilde{D}^{(i)}_{j}$};
    
    \foreach \x/\label in {
        3/$E^{(i)}_{j,1}$,
        8/$E^{(i)}_{j,2}$,
        11/$E^{(i)}_{j,\mathscr N^{(i)}_{j}}$
    }
    {
        \filldraw[blue] (\x,0) circle (1.8pt);
        \node[below, blue] at (\x,0) {\label};
    }
    
    \node[above] at (2,0.2) {$\left(n^{(i)}_{j,1},\,c^{(i)}_{j,1}\right)$};
    \node[above] at (5.5,0.2) {$\left(n^{(i)}_{j,2},\,c^{(i)}_{j,2}\right)$};
    \node[above] at (9.5,0.2) {$\cdots$};
    \node[above] at (13,0.2) {$\left(n^{(i)}_{j,\mathscr N^{(i)}_{j}+1},\,c^{(i)}_{j,\mathscr N^{(i)}_{j}+1}\right)$};
    
    \end{tikzpicture}
    \caption{Inter-departure time of class-$i$ jobs decomposed into sub-intervals with constant state.}
    \label{fig:event times}
\end{figure}

Following Equation \eqref{eqn:departure_rate}, the departure rate $\delta^{(i)}_{j,k}$ of class-$i$ jobs during interval $k=1,\ldots,\mathscr N^{(i)}_{j}+1$ is
\begin{align*}
    \delta^{(i)}_{j,k} = 
    \begin{cases}
        n^{(i)}_{j,k} \, \mu \, s_i\Big(\varphi^{(i)}_{j,k};p_i\Big), &n^{(i)}_{j,k} > 0,
        \\
        0, &n^{(i)}_{j,k} = 0.
    \end{cases}
\end{align*}
where the per-job allocation ratio, which is the number of cores allocated per job, during interval $k=1,\ldots,\mathscr N^{(i)}_{j}+1$ is defined for the case $n^{(i)}_{j,k} > 0$ as
\[
\varphi^{(i)}_{j,k} := \frac{c^{(i)}_{j,k}}{n^{(i)}_{j,k}}.
\]
Let $\xi_k$ denote the likelihood of no class-$i$ departure during the interval $k=1,\ldots,\mathscr N^{(i)}_{j}$.
Then,
\begin{align*}
    \xi_k=\left\lbrace\begin{array}{cc}
        \exp\biggl(-\delta^{(i)}_{j,k} \Big(E^{(i)}_{j,1} - \widetilde{D}^{(i)}_{j-1}\Big)\biggr),  & k=1, \\
         \exp\biggl(-\delta^{(i)}_{j,k} \Big(E^{(i)}_{j,k} - E^{(i)}_{j,k-1}\Big)\biggr), &  k=2,\ldots \mathscr{N}^{(i)}_{j}.
    \end{array}\right .
\end{align*}
Similarly, the likelihood of no class-$i$ departure during $\bigg(E^{(i)}_{j,\mathscr N^{(i)}_{j}} ,E^{(i)}_{j,\mathscr N^{(i)}_{j}}+t\bigg]$ is given by
\begin{align*}
    \xi_{\mathscr{N}^{(i)}_{j}+1}(t)= \exp\biggl(-\delta^{(i)}_{j, \mathscr N^{(i)}_j+1} t\biggr), \ t\leq \widetilde{D}^{(i)}_{j}-E^{(i)}_{j,\mathscr N^{(i)}_{j}} .
\end{align*}
Let $\bs \chi$ store the complete sample path evolution of the system, and for convenience, let $\bs\chi\big(t_1, t_2\big]$ be the subset of $\bs\chi$ observed in the interval $\big(t_1, t_2\big]$.
Consequently, the conditional likelihood of the class-$i$ departure process given the observed event sequence $\bs\chi\left(\widetilde{D}^{(i)}_{j-1}, \widetilde{D}^{(i)}_j\right]$ is
\begin{align}\label{likelihood: one interdeparture time}
    \mathcal{L}\Big(p_i; \bs\chi\left(\widetilde{D}^{(i)}_{j-1}, \widetilde{D}^{(i)}_j\right]\Big) =    \left(\prod_{k=1}^{\mathscr{N}^{(i)}_{j}}\xi_k  \right) \delta^{(i)}_{j, \mathscr N^{(i)}_j+1} \, \xi_{\mathscr{N}^{(i)}_{j}+1}.
\end{align}
This expression is obtained by multiplying
\begin{enumerate}
    \item exponential survival $\xi_{k}$ for $k \in \Big\{1, 2, \cdots, \mathscr{N}^{(i)}_j\Big\}$, over all $\mathscr{N}^{(i)}_j$ preceding sub-intervals, and
    \item the instantaneous hazard at the departure epoch.
\end{enumerate}
Now, suppose our observed system data comprises $M_i$ departures of class-$i$. 
Then, the likelihood $\mathcal{L}\big(p_i; \bs\chi\big)$ of the parameter $p_i$ conditioned on $\bs\chi$ and the corresponding log-likelihood $\ell\big(p_i;\bs\chi\big)$ are
\begin{align}
    \mathcal{L}\big(p_i; \bs\chi\big) &= \prod_{j=1}^{M_i} \mathcal{L}\Big(p_i; \bs\chi\left(\widetilde{D}^{(i)}_{j-1}, \widetilde{D}^{(i)}_j\right]\Big), \; \label{eqn:likelihood}
    \\
    \ell\big(p_i; \bs\chi\big) &= \log\Big( \mathcal{L}\big(p_i; \bs\chi\big) \Big)
    = \sum_{j=1}^{M_i} \log \bigg(\mathcal{L}\Big(p_i; \bs\chi\left(\widetilde{D}^{(i)}_{j-1}, \widetilde{D}^{(i)}_j\right]\Big)\bigg), \label{eqn:log likelihood}
\end{align}
In particular,
\begin{align}\label{eqn:log_likelihood:formula}
    \ell\big(p_i; \bs\chi\big) = \ &M_i \log(\mu)
    + \sum_{j=1}^{M_i} \log\Bigg(n^{(i)}_{j, {\mathscr N}^{(i)}_{j}+1}\Bigg)
    + \sum_{j=1}^{M_i} \log\left(
    s_i\left(\varphi^{(i)}_{j, {\mathscr N}^{(i)}_{j}+1}; p_i\right)\right)
    \notag
    \\
    &- \mu \sum_{j=1}^{M_i} \Bigg[
    n^{(i)}_{j,1} \ s_i\left(\varphi^{(i)}_{j,1}; p_i\right)\Big(E^{(i)}_{j,1} - \widetilde{D}^{(i)}_{j-1}\Big)
    + \cdots \notag
    \\
    &\hspace{1.5cm}
    + n^{(i)}_{j, {\mathscr N}^{(i)}_{j}+1}
    \ s_i\left(\varphi^{(i)}_{j, {\mathscr N}^{(i)}_{j}+1}; p_i\right)
    \bigg(\widetilde{D}^{(i)}_{j} - E^{(i)}_{j,{\mathscr N}^{(i)}_{j}}\bigg)
    \Bigg].
\end{align}

The MLE of $p_i$, denoted by $\widehat{p}_i$, is then given by
\begin{align} \label{eqn: MLE}
    \widehat{p}_i = \argmax_{p_i \in [0,1]} \ \ell\big(p_i; \bs\chi\big)
\end{align}

\begin{remark}
    Even though the goal of this section is to jointly estimate $(p_1, p_2)$, \eqref{eqn: MLE} suggests that we can do so by solving two separate optimization problems, one for each $p_1$ and $p_2$.
    However, this does not imply that the estimators $\widehat{p}_1$ and $\widehat{p}_2$ are independent. \hfill $\spadesuit$
\end{remark}

\begin{remark}
    The estimation procedure will work for any policy $\pi \in \Pi$.  
    The accuracy of the estimation procedure increases in $M_i$ (the number of observed regeneration cycles in the data), which relates to the specific chosen core allocation policy. \hfill $\spadesuit$
\end{remark}

\begin{ass} \label{ass: max jobs}
    The system is allowed to hold at most $n_{\max}$ number of jobs of each class. That is, any arrival of job class-$i$ (for $i = 1,2$) is blocked when there are already $n_{\max}$ number of class-$i$ jobs in the system.
\end{ass}

This assumption is made purely to satisfy the requirements of Theorem \ref{theorem:strong consistency} below for strong consistency of the speed-up estimates to their true values.
In particular, it appears in the proof of Lemma \ref{lemma:consistency:andrews} which is a crucial result in the proof of Theorem \ref{theorem:strong consistency} in Section \ref{subsection:proof:theorem:strong consistency}.
Assumption \ref{ass: max jobs} has the role of being able to prove finiteness of expectation of a certain random variable which is a necessity for Lemma \ref{lemma:consistency:andrews}.
However, this assumption is not essential for the methodology designed in this paper to be implemented from a practical point of view.

\begin{theorem} \label{theorem:strong consistency}
    Let $0 < \alpha < 1$. Consider an observation of the sample-path of the system under policy $\pi \in \Pi$ with total $n$ jobs. If Assumptions \ref{ass:Lipschitz speed up} and \ref{ass: max jobs} are satisfied, then as $n \rightarrow \infty$, we have $M_1, M_2 \rightarrow \infty$, and $\widehat{p}_1 \overset{\text{a.s.}}{\rightarrow } p_1$, $\widehat{p}_2 \overset{\text{a.s.}} {\rightarrow}p_2$. 
\end{theorem}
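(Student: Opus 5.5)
Under a fixed policy $\pi\in\Pi$ with Assumption~\ref{ass: max jobs}, the state process $\big(n^{(1)},n^{(2)}\big)$ is an irreducible continuous-time Markov chain on the finite set $\{0,\dots,n_{\max}\}^2$; because $\pi$ is deterministic, $c^{(1)}$ is a function of this state. My plan is to treat the normalized log-likelihood $\frac{1}{M_i}\ell(p_i;\bs\chi)$ as an additive functional over regeneration cycles and then run the classical Wald-type consistency argument.

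\textbf{Step 1: $M_1,M_2\to\infty$.} Take returns to the empty state $(0,0)$ as regeneration points; they have finite mean cycle length since the chain is finite and irreducible. Each cycle contains a class-$i$ arrival with positive probability, because $0<\alpha<1$. Also, since $\pi\in\Pi$ prevents starvation, a class-$i$ job eventually departs with positive probability. As $n\to\infty$, the number of cycles tends to infinity a.s., so $M_i\to\infty$ a.s.

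\textbf{Step 2: compensator form of the log-likelihood.} Rewrite \eqref{eqn:log_likelihood:formula}, up to terms not depending on $p_i$, as
\[
\ell(p_i;\bs\chi)=\int_0^{\tau}\log\big(\mu\, n^{(i)}(t^-)\,s_i(\varphi^{(i)}(t^-);p_i)\big)\,dN_i(t)-\int_0^{\tau}\mu\, n^{(i)}(t)\,s_i(\varphi^{(i)}(t);p_i)\,dt,
\]
where $N_i$ is the class-$i$ departure counting process. Summing these increments over regeneration cycles gives i.i.d.\ summands $Y_m(p)$.

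The renewal-reward SLLN then gives, for each fixed $p$, that $\frac{1}{K}\sum_{m=1}^K Y_m(p)\to \exptn[Y_1(p)]=:L(p)$ a.s. Using the ergodic theorem with the stationary distribution $\nu$ of the chain, $L(p)$ is proportional to
\[
\sum_{x}\nu(x)\,\delta^{(i)}_x(p_i)\Big[\log\delta^{(i)}_x(p)-\frac{\delta^{(i)}_x(p)}{\delta^{(i)}_x(p_i)}\Big].
\]
Since $a\log b-b$ is uniquely maximized at $b=a$, $L$ is maximized at $p=p_i$.

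\textbf{Step 3: uniform convergence.} This is the Lemma~\ref{lemma:consistency:andrews} step, a ULLN in the style of Andrews for a compact parameter space. It requires a Lipschitz-in-$p$ domination of $Y_1(p)$ with an integrable constant. By Assumption~\ref{ass:Lipschitz speed up}, $|s_i(z;p)-s_i(z;p')|\le\kappa|p-p'|$. Moreover $s_i\ge \min(z,1)\ge c^{(i)}/n_{\max}$ when cores are allocated, so $\log s_i$ is Lipschitz with constant $\kappa n_{\max}/c^{(i)}$. This needs a positive lower bound on nonzero allocations, which holds because the policy is finite-valued on the finite state space.

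The Lipschitz constant of $Y_1$ is then bounded by $C\,(\text{number of departures in a cycle}+\text{cycle length})$. This has finite expectation because the chain is finite, so the cycle length has exponential tails; this is exactly where Assumption~\ref{ass: max jobs} enters. Pointwise convergence plus stochastic equicontinuity then yields $\sup_{p\in[0,1]}\big|\frac1K\sum_m Y_m(p)-L(p)\big|\to0$ a.s. Because $M_i/K$ converges a.s. to a positive constant, the same holds for $\frac{1}{M_i}\ell$ up to a scaling factor. The partial last cycle is negligible.

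\textbf{Step 4: conclude via the argmax argument.} Uniform convergence, continuity of $L$, and a well-separated maximum together give $\widehat p_i\to p_i$ a.s.

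The main obstacle I expect is \emph{identifiability}: showing that $p_i$ is the unique maximizer of $L$. This requires some state $x$ with $\nu(x)>0$ in which the per-job allocation $\varphi^{(i)}>1$, and in which $p\mapsto s_i(\varphi;p)$ is injective. I would argue this from the fact that, under a non-starving policy, states with a single class-$i$ job receiving $c\,\pi>1$ cores are reachable. I would also assume, as implicit in the model, strict monotonicity of $s_i$ in $p$ for $z>1$; this holds in Examples~\ref{example: speed up: amdahl law} and \ref{example: speed up: 2}. If this fails, consistency can only hold for the identifiable part of the parameter.
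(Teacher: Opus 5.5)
Your proposal follows the paper's route: regeneration at returns to the empty state, a split of $\ell(p_i;\bs\chi)$ into i.i.d.\ cycle contributions, a Lipschitz envelope built from the number of departures and the cycle length (using $n^{(i)}\le n_{\max}$), Andrews' uniform strong law on $[0,1]$, and then the argmax step.

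Where you go beyond the paper is the limit criterion. You compute it, via the compensator of the class-$i$ departure process and the stationary distribution $\nu$, as a multiple of $\sum_x\nu(x)\big[\delta^{(i)}_x(p_i)\log\delta^{(i)}_x(p)-\delta^{(i)}_x(p)\big]$. You then get maximization at $p_i$ from $a\log b-b\le a\log a-a$. The paper only asserts that $\exptn[q(\bs Z_1,p)]/\exptn[C_1]$ is maximized at the true value. It infers Kullback--Leibler identifiability from the claim that no distinct $p',p''$ satisfy $s_i(\cdot;p')=s_i(\cdot;p'')$ a.e. Your formula shows why that condition is not the relevant one: only the finitely many per-job allocations $\varphi^{(i)}$ visited under $\pi$ enter the likelihood, and those with $\varphi^{(i)}\le 1$ carry no information about $p$.

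The step that does not go through is your plan to \emph{derive} the identifiability condition from $\pi\in\Pi$. Non-starvation does not force any class-$i$ job to receive more than one core. Take, for example, $c\,\pi\big(n^{(1)},n^{(2)}\big)=\min\{n^{(1)},c/2\}$:
\begin{itemize}
\item It starves neither class and is regenerative.
\item Yet every class-1 job has $\varphi^{(1)}\le 1$, so $s_1(\varphi^{(1)}_{j,k};p)=\varphi^{(1)}_{j,k}$ for all $p$.
\item Hence $\ell(p_1;\bs\chi)$ is constant in $p_1$, and $\widehat p_1$ cannot be consistent.
\end{itemize}
Reachability of the state $(1,0)$ is automatic, but $c\,\pi(1,0)>1$ is not. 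This must therefore enter as an additional hypothesis. One sufficient choice is $c\,\pi(1,0)>1$ and $c\,(1-\pi(0,1))>1$; these states have positive stationary probability because $0<\alpha<1$. Add to this strict monotonicity of $p\mapsto s_i(z;p)$ for $z>1$, which holds in Examples~\ref{example: speed up: amdahl law} and~\ref{example: speed up: 2}. Then a single such state makes one term of your sum strictly negative for $p\neq p_i$, and the argument is complete. The theorem as stated, and the paper's proof of it, silently need the same condition.
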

\begin{remark}
    $0 < \alpha < 1$ prevents the degenerate case where the incoming stream of jobs belongs only to one class. \hfill $\spadesuit$
\end{remark}

\subsection{Proof of Theorem \ref{theorem:strong consistency}} \label{subsection:proof:theorem:strong consistency}

In this subsection, we provide the complete proof of Theorem \ref{theorem:strong consistency}.
First, we explain a high-level structure of the proof. 
We start by introducing regeneration cycles in the context of the system under study.
Then, we introduce complex new objects which may seem non-intuitive but will be crucial to the proof of Theorem \ref{theorem:strong consistency}.
Then, Lemma \ref{lemma:consistency:andrews}, and correspondingly Corollary \ref{corollary} establishes two results which serve as the hypothesis for Theorem \ref{theorem:strong consistency}.
Finally, we use a known result from \cite{andrews1992} to establish the result.

A key ingredient in the proof is the use of \emph{regeneration cycles}, which allow the system’s sample path to be decomposed into independent segments. Exploiting this regenerative structure, we extract i.i.d.\ objects from otherwise dependent queueing data. The availability of such an i.i.d.\ substructure enables us to invoke existing results to establish strong consistency. We refer the reader to \cite{bodas2023} for a detailed exposition of this approach; the analysis presented below is of a similar nature. 

Let us first define a regeneration cycle in the context of the system under study.
Consider a system that starts empty at time 0.
Then, regeneration time is the first time when there is a departure of a job of either class leading to a fully empty system.

Now, we turn our attention to the creation of i.i.d.\ objects from dependent queueing data, as explained below.
As introduced in Section \ref{section: model description}, consider any fixed policy $\pi: \{0, 1, \cdots, n_{\max}\} \times \{0, 1, \cdots, n_{\max}\} \mapsto [0,1]$.
Then, under policy $\pi$, the system is a regenerative process.
Suppose that the system sample path dataset consists of $N_r$ number of regeneration cycles, where $N_r$ is a random variable (The first and the last may be partial cycles).
Then, we claim that we can decompose the log-likelihood $\ell(p, \bs\chi)$ into a sum of i.i.d.\ random variables as written below.
\begin{align*}
    \ell(p; \bs\chi) = \sum_{k=1}^{N_r} q\big(\bs{Z}_k; p\big).
\end{align*}
Here, for each regeneration cycle $k$,
\begin{align}\label{eqn: Z: state space}
    \bs{Z}_k = \Bigg(C_k, &\Big\{\widetilde{D}^{(i)}_j\Big\}_{j=\widetilde{C}_{k-1}+1}^{\widetilde{C}_k} ,
    \Bigg\{\Big\{n^{(i)}_{j,k}\Big\}_{k=1}^{\mathscr N^{(i)}_{j}+1}\Bigg\}_{j=\widetilde{C}_{k-1}+1}^{\widetilde{C}_k}, 
    \Bigg\{\Big\{c^{(i)}_{j,k}\Big\}_{k=1}^{\mathscr N^{(i)}_{j}}\Bigg\}_{j=\widetilde{C}_{k-1}+1}^{\widetilde{C}_k}, \notag
    \\
    &\Bigg\{\Big\{E^{(i)}_{j,k}\Big\}_{k=1}^{\mathscr N^{(i)}_{j}}\Bigg\}_{j=\widetilde{C}_{k-1}+1}^{\widetilde{C}_k}\Bigg)
    \in \mathscr{Z} := \bigcup_{i=1}^{\infty} \bigcup_{j=1}^{\infty} 
    \Big(\mathbb{N} \times \mathbb{R}^{i}_+ \times \mathbb{N}^{j \times i} \times \mathbb{N}^{j \times i} \times \mathbb{R}_{+}^{j \times i}\Big).
\end{align}
which collects all system information from regeneration cycle $k$. This includes, for instance, the number of class-$i$ job arrivals during cycle $k$, their departure times, the sequence of event times together with the corresponding system states (i.e., the number of jobs present), as well as all core allocations between consecutive event times.
Furthermore, let $q: \mathscr{Z} \times [0,1] \to \mathbb{R}$ be the function defined in \eqref{eqn:q(Z;p):formula}, and assume that the sequence $\big\{q(\bs{Z}_k; p)\big\}_{k=2}^{N_r-1}$ consists of i.i.d.\ random variables. Recall that cycles $1$ and $N_r$ may be partial regeneration cycles.
For notational simplicity, we consider Equation \eqref{eqn:log_likelihood:formula} without the class index $i$, so that the total number of departures is $M$. Let $C_k$ denote the number of departures (of class-$i$ jobs, with the index suppressed) during the $k$-th regeneration cycle, and define
\[
\widetilde{C}_k := C_1 + \cdots + C_k.
\]
Clearly, $C_k$ also equals the number of arrivals during the $k$-th regeneration cycle, and hence $\widetilde{C}_k$ is the total number of customers that have entered the system over the first $k$ regeneration cycles. For $k \in \{2, \ldots, N_r - 1\}$, we define
\begin{align}\label{eqn:q(Z;p):formula}
    q\big(\bs{Z}_k; p\big) = \ &C_k \log(\mu) 
    + \sum_{j=\widetilde{C}_{k-1}+1}^{\widetilde{C}_k} 
    \log\left(n^{(i)}_{j, {\mathscr N}^{(i)}_{j}+1}\right)
    + \sum_{j=\widetilde{C}_{k-1}}^{\widetilde{C}_k-1}
    \log\left(
    s_i\left(\varphi^{(i)}_{j, {\mathscr N}^{(i)}_{j}+1}; p\right)\right)
    \notag
    \\
    &- \mu \sum_{j=\widetilde{C}_{k-1}}^{\widetilde{C}_k-1} \Bigg[
    n^{(i)}_{j,1} \ s_i\left(\varphi^{(i)}_{j,1}; p\right)
    \Big(E^{(i)}_{j,1} - \widetilde{D}^{(i)}_{j-1}\Big)
    + \cdots \notag
    \\
    &\hspace{1.5cm}
    + n^{(i)}_{j, {\mathscr N}^{(i)}_{j}+1}
    \ s_i\left(\varphi^{(i)}_{j, {\mathscr N}^{(i)}_{j}+1}; p\right)
    \bigg(\widetilde{D}^{(i)}_{j} - E^{(i)}_{j,{\mathscr N}^{(i)}_{j}}\bigg)
    \Bigg].
\end{align}
The definition of $q(\bs{Z}_k;p)$ above has four summands on the right-hand side.
We now verify that each of these summands depends only on data from regeneration cycle $k$, which is crucial for establishing the i.i.d.\ structure across cycles.
\begin{enumerate}

\item \textit{Summand 1.}
This follows directly, since $C_k$ is the number of departures occurring in the $k$-th regeneration cycle.

\item \textit{Summand 2.}
Recall that
\[
n^{(i)}_{j,\mathscr{N}^{(i)}_{j}+1}
\]
is the number of class-$i$ jobs in the interval
\[
\Big( E^{(i)}_{j,\mathscr{N}^{(i)}_{j}},\,\widetilde{D}^{(i)}_{j} \Big],
\]
which is a subinterval of
$
\Big(\widetilde{D}^{(i)}_{j-1},\, \widetilde{D}^{(i)}_{j}\Big].$
Hence, for
$ j = \widetilde{C}_{k-1}+1, \ldots, \widetilde{C}_k-1,$
this interval lies entirely within the $k$-th regeneration cycle.
For $j = \widetilde{C}_{k-1}$, observe that
\[ E^{(i)}_{j,\mathscr{N}^{(i)}_{j}} \]
is the last event before $\widetilde{D}^{(i)}_{j+1}$. Since at least one event — namely the arrival of job $j+1$ — occurs in the $k$-th regeneration cycle, this interval also lies within the $k$-th regeneration cycle.

\item \textit{Summand 3.}
This follows by the same argument as in Summand 2, since it depends on the same quantities.

\item \textit{Summand 4.}
For
\[
j = \widetilde{C}_{k-1}+1, \ldots, \widetilde{C}_k-1,
\]
all quantities in the fourth summand are clearly associated with the $k$-th regeneration cycle.

For $j = \widetilde{C}_{k-1}$, note that if any
$
E^{(i)}_{j,\cdot}
$
lies in the $(k-1)$-th regeneration cycle, then the corresponding
\[
n^{(i)}_{j,\cdot} = 0,
\]
and hence such terms do not contribute.

\end{enumerate}

Now we have introduced all the necessary objects to prove Theorem \ref{theorem:strong consistency}.
Lemma \ref{lemma:consistency:andrews}, and in particular Equations \ \eqref{eqn: andrews_corollary: eqn_1} and \eqref{eqn: andrews_corollary: eqn_2} in Corollary \ref{corollary} below, have the purpose of verifying necessary hypothesis for using known results to prove strong consistency.

\begin{lemma} \label{lemma:consistency:andrews}
    There exists a measurable function $B(\cdot): \mathscr{Z} \mapsto \mathbb{R}$ 
    such that, for all $j$,
    \begin{align*}
        \Big\vert q\big(\bs{Z}_j, p'\big) - q\big(\bs{Z}_j, p\big) \Big\vert \leq B\big(\bs{Z}_j\big)\, \big\vert p-p' \big\vert,
    \end{align*}
    where $\exptn\big[ B\big(\bs{Z}_j\big)\big] < \infty$.
\end{lemma}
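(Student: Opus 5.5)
We bound the $p$-dependent summands of $q(\bs{Z}_j;p)$ in \eqref{eqn:q(Z;p):formula} term by term. Summands 1 and 2 do not depend on $p$, so they cancel in the difference $q(\bs{Z}_j,p')-q(\bs{Z}_j,p)$. This leaves the log-speed-up summand (Summand 3) and the integrated-hazard summand (Summand 4), and we build $B$ as the sum of the Lipschitz constants of these two pieces.

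\textbf{Summand 4.} By Assumption~\ref{ass:Lipschitz speed up}, each term changes by at most
\[
n^{(i)}_{j,k}\,\kappa\,|p-p'|\,(\text{length of subinterval }k),
\]
and $n^{(i)}_{j,k}\le n_{\max}$ by Assumption~\ref{ass: max jobs}. Summing over all subintervals in the cycle, the lengths add up to at most the cycle length $\tau_j$. Hence this summand contributes at most $\mu\kappa n_{\max}\tau_j\,|p-p'|$.

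\textbf{Summand 3.} Whenever the speed-up function is evaluated at a departure epoch, $n^{(i)}\ge1$ and $\varphi^{(i)}>0$. We need a lower bound on $s_i(\varphi;p)$ to control $\log$. If $\varphi\ge1$, then $s_i(\varphi;p)\ge s_i(1;p)=1$ by monotonicity and property (a). If $\varphi<1$, then $s_i(\varphi;p)=\varphi$ does not depend on $p$, so the term cancels exactly. In both cases $|\log s_i(\varphi;p')-\log s_i(\varphi;p)|\le \kappa|p-p'|$, since $\log$ is $1$-Lipschitz on $[1,\infty)$. Summing over the $C_j$ departures gives a contribution of at most $\kappa C_j|p-p'|$.

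We therefore take
\[
B(\bs{Z}_j)=\kappa\bigl(C_j+\mu n_{\max}\tau_j\bigr).
\]
This is clearly measurable, because it is a finite combination of coordinates of $\bs{Z}_j$; $\tau_j$ is recoverable from the event and departure times. The one bookkeeping point is the boundary index $j=\widetilde C_{k-1}$ in Summands 3 and 4, which may straddle the previous cycle. As argued before the lemma, the pieces lying in the previous cycle carry $n^{(i)}=0$ and vanish, so they do not enlarge the bound.

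\textbf{Integrability (main obstacle).} It remains to show $\exptn[\tau_j]<\infty$ and $\exptn[C_j]<\infty$. Under Assumption~\ref{ass: max jobs} and a fixed $\pi\in\Pi$, the state process is a continuous-time Markov chain on the finite set $\{0,\dots,n_{\max}\}^2$. It is irreducible on the states reachable from the empty state, because $\pi\in\Pi$ prevents starvation, so every nonempty state has a positive total departure rate. A finite irreducible CTMC is positive recurrent, and the return time to $(0,0)$ has finite mean; indeed it has exponential moments. This gives $\exptn[\tau_j]<\infty$. Finally, $C_j$ is dominated by the number of arrivals in the cycle, which is Poisson counting run over the stopping time $\tau_j$. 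By Wald's identity, $\exptn[C_j]\le\lambda\exptn[\tau_j]<\infty$.

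The delicate part is justifying irreducibility and positive departure rates from the definition of $\Pi$. If that fails, we restrict to the communicating class containing $(0,0)$, where the same finite-state argument applies.
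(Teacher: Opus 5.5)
Your proposal is correct and follows the paper's proof essentially step for step: the same term-by-term Lipschitz bounds on the log-speed-up and integrated-hazard pieces, the same $B=\kappa\bigl(C+\mu\, n_{\max}\cdot(\text{cycle length})\bigr)$ (the paper keeps the actual $n^{(i)}$ inside $B$ and uses $n_{\max}$ only when bounding $\exptn[B]$), and integrability from a finite mean cycle length and a finite mean number of departures per cycle.

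You are more careful than the paper in two places. First, you split off the case $\varphi<1$, where the paper's step $\sup_x 1/x\le 1$ tacitly requires $s_i\ge 1$ but the term is $p$-free anyway. Second, you derive $\exptn[\tau_j],\exptn[C_j]<\infty$ from the finite-state CTMC and Wald's identity rather than merely invoking stability. Your closing fallback about restricting to the communicating class of $(0,0)$ is superfluous (and would not by itself give finite cycles), since $\pi\in\Pi$ already guarantees regeneration.
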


\begin{proof}
    The starting point is the identity
    \begin{align}
        \Big\vert &q\big(\bs{Z}_k;p\big) - q\big(\bs{Z}_k;p'\big) \Big\vert = \sum_{j = \widetilde{C}_{k-1}+1}^{\widetilde{C}_k} \underbrace{\left\vert \log\left( s_i\left(\varphi^{(i)}_{j, {\mathscr N}^{(i)}_{j}+1}; p\right)\right) - \log\left( s_i\left(\varphi^{(i)}_{j, {\mathscr N}^{(i)}_{j}+1}; p'\right)\right) \right\vert}_{\text{(I)}} \notag
        \\
        &\hspace{5.5cm} + \mu \sum_{j = \widetilde{C}_{k-1}+1}^{\widetilde{C}_k} \Bigg[ n^{(i)}_{j,1} \  \underbrace{\bigg\vert s_i\left(\varphi^{(i)}_{j,1}; p\right) - s_i\left(\varphi^{(i)}_{j,1}; p'\right)\bigg\vert}_{\text{(II)}} \Big(E^{(i)}_{j,1} - \widetilde{D}^{(i)}_{j-1}\Big)
        + \cdots \notag
        \\
        &+ n^{(i)}_{j, {\mathscr N}^{(i)}_{j}+1} \bigg(\widetilde{D}^{(i)}_{j} - E^{(i)}_{j,{\mathscr N}^{(i)}_{j}+1}\bigg) \underbrace{\left\vert \ s_i\left(\varphi^{(i)}_{j, {\mathscr N}^{(i)}_{j}+1}; p\right) - \ s_i\left(\varphi^{(i)}_{j, {\mathscr N}^{(i)}_{j}+1}; p'\right)\right\vert}_{\text{(III)}} \Bigg].
    \end{align}

     Observe that  the terms (II) and (III) can be dealt with in a straightforward manner: they are majorized by $\kappa\big\vert p-p' \big\vert$ due to  Assumption \ref{ass:Lipschitz speed up}, with $\kappa$  the Lipschitz constant as defined in that assumption. In addition, for term (I) we have, again by Assumption \ref{ass:Lipschitz speed up},
    \begin{align*}
        \Big\vert \log\big(s_i(z;p)\big) - \log\big(s_i(z;p')\big) \Big\vert &= \Bigg\vert \int_{s_i(z;p)}^{s_i(z;p')} \frac{\mathrm{d}x}{x} \Bigg\vert 
       \leq \big\vert s_i(z;p') - s_i(z;p) \big\vert \times \sup_{x \in [s_i(z;p), s_i(z;p')]} \frac{1}{x}  
        \\
        &\leq \big\vert s_i(z;p') - s_i(z;p) \big\vert \leq \kappa \big\vert p-p' \big\vert .
    \end{align*}

    As a result, $\big\vert q\big(\bs{Z}_k;p\big) - q\big(\bs{Z}_k;p'\big) \big\vert \leq\big\vert p - p'\big\vert\,B\big(\bs{Z}_k\big)$, with 
    \begin{align}
       B\big(\bs{Z}_k\big):=\kappa \Bigg(&C_k + \mu \sum_{j = \widetilde{C}_{k-1}+1}^{\widetilde{C}_k} \Bigg[ n^{(i)}_{j,1} \ \Big(E^{(i)}_{j,1} - \widetilde{D}^{(i)}_{j-1}\Big) + \cdots + n^{(i)}_{j, {\mathscr N}^{(i)}_{j}+1} \bigg(\widetilde{D}^{(i)}_{j} - E^{(i)}_{j,{\mathscr N}^{(i)}_{j}}\bigg) \Bigg] \Bigg).
    \end{align}
    Now, recalling that $n^{(i)}_{j,1} \le_{\text{a.s.}} n_{\max}, n^{(i)}_{j,2} \le_{\text{a.s.}} n_{\max}, \cdots$, we obtain 
    \begin{align}
        \exptn \big[B(\bs{Z}_k)\big] &\leq \kappa\exptn\big[C_k\big] + \kappa\,\mu\, \exptn\left[ \sum_{j=\widetilde C_{k-1}+1}^{\widetilde C_k} n_{\max}\Big(\widetilde D^{(i)}_{j}-\widetilde D^{(i)}_{j-1}\Big) \right] \notag
        \\
        &= \kappa \, \exptn\big[C_k\big] + \kappa\,\mu\, n_{\max} \, \exptn\left[ \widetilde D^{(i)}_{\widetilde C_k} - \widetilde D^{(i)}_{\widetilde C_{k-1}+1} \right] \notag
        \\
        &\leq \kappa\,\exptn\big[C_k\big] + \kappa\,\mu\,n_{\max}\, \exptn\big[R_k\big].
    \end{align}
    where $R_k$ is the length of regeneration cycle $k$.
    The system is stable, so that $\exptn[C_k] < \infty$ and $\exptn[R_k] < \infty$. As a consequence, $\exptn[B(\bs{Z}_k)] < \infty$, thus completing our proof.
\end{proof}

\begin{corlly}[Corollary of Lemma \ref{lemma:consistency:andrews}] \label{corollary}
The following two equations hold:
    \begin{align}
        \sup_{N_r \in \mathbb{N}} \ \frac{1}{N_r} \sum_{k=1}^{N_r} \exptn\big[B\big(\bs{Z}_k\big)\big] &< \infty, \label{eqn: andrews_corollary: eqn_1}
        \\
        \frac{1}{N_r} \sum_{k=1}^{N_r} \Big(B\big(\bs{Z}_k\big) - \exptn\big[B\big(\bs{Z}_k\big)\big]\Big) &\overset{\text{a.s.}}{\longrightarrow} 0, \ \text{as} \ N_r \rightarrow \infty. \label{eqn: andrews_corollary: eqn_2}
    \end{align}
\end{corlly}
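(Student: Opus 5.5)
The plan is to derive both \eqref{eqn: andrews_corollary: eqn_1} and \eqref{eqn: andrews_corollary: eqn_2} from the regenerative structure of the system under a fixed policy $\pi\in\Pi$, together with the integrability bound of Lemma \ref{lemma:consistency:andrews}. The first step is to record that, because the system is regenerative under $\pi$ and a regeneration epoch is a departure that leaves the system empty, the full cycles $\bs{Z}_2,\bs{Z}_3,\ldots$ are i.i.d. This holds since every full cycle starts from the same empty state with fresh Poisson arrivals, independent class marks and exponential job sizes. Hence $B(\bs{Z}_k)$, $k\ge 2$, are i.i.d. copies of a nonnegative random variable with common mean $\beta:=\exptn[B(\bs{Z}_2)]$. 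By Lemma \ref{lemma:consistency:andrews}, and in particular the bound $\exptn[B(\bs{Z}_k)]\le \kappa\,\exptn[C_k]+\kappa\mu n_{\max}\exptn[R_k]$, we have $\beta<\infty$.

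The boundary cycles are handled next. Cycle $1$ starts from the empty state at time $0$ and ends at the first regeneration, so it is in fact a full cycle and has the same law as the others. The last cycle $N_r$ may be truncated. Its data form a sub-path of a full cycle, and since $B$ is a sum of nonnegative terms ($C_k$ and products of job counts with interval lengths), its value is dominated by $B$ of the corresponding completed cycle. So in all cases $\exptn[B(\bs{Z}_k)]\le\beta$. Equation \eqref{eqn: andrews_corollary: eqn_1} then follows immediately, because the Cesàro averages $\frac1{N_r}\sum_{k=1}^{N_r}\exptn[B(\bs{Z}_k)]$ are bounded by $\beta$ uniformly in $N_r$.

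For \eqref{eqn: andrews_corollary: eqn_2}, split
\[
\frac{1}{N_r}\sum_{k=1}^{N_r}\big(B(\bs{Z}_k)-\exptn[B(\bs{Z}_k)]\big)
=\frac{1}{N_r}\sum_{k=1}^{N_r-1}\big(B(\bs{Z}_k)-\beta\big)+\frac{1}{N_r}\big(B(\bs{Z}_{N_r})-\exptn[B(\bs{Z}_{N_r})]\big)
\]
(with the convention above that the first $N_r-1$ cycles are full). The first term tends to $0$ almost surely by Kolmogorov's strong law of large numbers, since the summands are i.i.d.\ with finite mean $\beta$. For the second term, $\exptn[B(\bs{Z}_{N_r})]\le\beta$ is bounded, so it remains to show $B(\bs{Z}_{N_r})/N_r\to0$ almost surely. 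Here I would dominate $B(\bs{Z}_{N_r})$ by $B$ of the completed $N_r$-th cycle, call it $\bar B_{N_r}$, which is one of an i.i.d.\ integrable sequence $(\bar B_k)_{k\ge1}$. The standard fact that $\bar B_k/k\to0$ almost surely for such a sequence follows from Borel--Cantelli via $\sum_k\prob(\bar B_1>\varepsilon k)\le\exptn[\bar B_1]/\varepsilon<\infty$.

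I expect the main obstacle to be this boundary bookkeeping. One issue is justifying that the truncated last cycle is dominated by an i.i.d.\ integrable variable even though $N_r$ is random and depends on the observation window. Another is making sure the pieces of $q$ attributed to cycle $k$ (the $j=\widetilde C_{k-1}$ terms) really are functions of cycle-$k$ data only, so that the i.i.d.\ claim holds. The first point is resolved by working with the infinite i.i.d.\ sequence of completed cycles and letting $N_r\to\infty$ along it. The second is resolved by the summand-by-summand verification given before Lemma \ref{lemma:consistency:andrews}.
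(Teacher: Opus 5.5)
Your argument is the paper's: the paper simply asserts that the $B(\bs{Z}_k)$ are i.i.d.\ with finite mean by Lemma~\ref{lemma:consistency:andrews}, so \eqref{eqn: andrews_corollary: eqn_1} is immediate and \eqref{eqn: andrews_corollary: eqn_2} is the ordinary strong law. Your boundary handling goes beyond the paper, and most of it is a correct refinement: the first cycle is full since the system starts empty, and the last cycle is handled almost surely by pathwise domination by $\bar B_{N_r}$, Borel--Cantelli for $\bar B_k/k\to 0$, and $N_r\to\infty$.

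One step in that bookkeeping is not justified, however. The pathwise bound $B(\bs{Z}_{N_r})\le \bar B_{N_r}$ does not give $\exptn[B(\bs{Z}_{N_r})]\le\beta$. The reason is that $N_r$ is the index of the cycle in which the observation window ends, so that completed cycle is size-biased (inspection paradox). For a window of $n$ jobs its mean approaches $\exptn[C_1 B(\bs{Z}_1)]/\exptn[C_1]$, not $\beta$.

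This does not damage the result. Under Assumption~\ref{ass: max jobs} the state space is finite, so cycle lengths and $C_1$ have exponential tails. Hence the size-biased mean is finite and $\exptn[B(\bs{Z}_{N_r})]/N_r\to 0$ still holds. It is also not needed for the corollary as stated. There $N_r$ is a deterministic index running over $\mathbb{N}$, so every $\bs{Z}_k$ is a full cycle with $\exptn[B(\bs{Z}_k)]=\beta$. The truncated last cycle only matters later, when the theorem's proof identifies $\sum_{k\le N_r} q(\bs{Z}_k;p)$ with $\ell_n$.
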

\begin{proof}
    The random variables $B\big(\bs{Z}_k\big)$ for $j = 1,2,\cdots, N_r$ are i.i.d., and by Lemma \ref{lemma:consistency:andrews}, $\exptn \big[B(\bs{Z}_1)\big] < \infty$, so that \eqref{eqn: andrews_corollary: eqn_1} follows trivially. For \eqref{eqn: andrews_corollary: eqn_2}, we rely on the (conventional version of the) strong law of large numbers.
\end{proof}

We now prove Theorem \ref{theorem:strong consistency}. The argument builds on ideas from \cite[Theorem 3.1]{bodas2023}, adapted to our setting.

\begin{proof} [Proof of Theorem \ref{theorem:strong consistency}] 
First observe that as $n \rightarrow \infty,$ we have that $n/N_r \overset{\text{a.s.}}{\longrightarrow} \exptn\big[C_1\big] < \infty$ so that 
$N_r \rightarrow \infty$ is equivalent to $n \rightarrow \infty$.
As a direct consequence of  \cite[Theorem 3(b)]{andrews1992}, in combination with
Lemma \ref{lemma:consistency:andrews} and Equations \eqref{eqn: andrews_corollary: eqn_1} and \eqref{eqn: andrews_corollary: eqn_2}, we conclude that 
\begin{align}
    \sup_{p \in [0,1]} \ \Bigg|\frac{1}{N_r} \sum_{j=1}^{N_r} q\big(\bs{Z}_j, p\big) - \exptn q\big({\bs{Z}_1}, p\big) \Bigg| \overset{\text{a.s}}{\longrightarrow} 0, \label{eqn: U-SLLN}
\end{align}
which is equivalent to, as $n \rightarrow \infty$, 
\begin{align*}
    \sup_{p \in [0,1]} \ \Bigg\vert \frac{1}{n} \ell_n\big(p; \bs\chi\big)\cdot \frac{n}{N_r} - \exptn q\big({\bs{Z}_1}, p\big) \Bigg\vert \overset{\text{a.s.}}{\longrightarrow} 0.
\end{align*}
The remainder of the proof is similar to that of \cite[Theorem 1]{inoue2023}.
Recalling that $n/N_r \overset{\text{a.s.}}{\longrightarrow} \exptn[C_1]$ as $n \rightarrow \infty,$ we have that
\begin{align*}
    \sup_{p \in [0,1]} \left\vert \frac{1}{n}\ell_n\big(p; \bs\chi\big)-\ell(p) \right\vert \overset{\text{a.s.}}{\longrightarrow} 0 ,
\end{align*}
where $\ell(p)=\exptn[q\big({\bs{Z}_1}, p\big)]/\exptn[C_1]$ is a non-random function which is maximized at the true parameter $p_1$.
The density of the interdeparture times for a fixed job class given in \eqref{likelihood: one interdeparture time}, and thereby the log-likelihood \eqref{eqn:log likelihood}, is uniquely determined by the speed-up function $s_i(z;p)$, which in turn is uniquely determined by the speed-up parameter $p$.
In particular, this means that there do not exist distinct $p', p'' \in [0,1]$ such that $s_i(\cdot;p') = s_i(\cdot;p'')$ almost everywhere.
We therefore conclude that the model is identifiable in the Kullback-Leibler sense, i.e.,
\begin{align*}
    \ell(p) - \ell(p_i) < 0, \ \forall \ p \neq p_i.
\end{align*}
Hence we can conclude that, $\widehat{p}_{i} \overset{\text{a.s.}}{\longrightarrow} p_i$. 
\end{proof}

\section{Solving for the optimal policy} \label{section: optimal policy}

The previous section provided a detailed explanation of the departure process, focusing on maximum likelihood estimation of $p_1, p_2$. 
We presented and proved a result that guarantees the strong consistency of these estimators to their true values under a fixed policy, as the size of the observed departure process dataset grows. 
In the current section, we shift our focus to evaluating the optimal core allocation policy for a given belief about $p_1, p_2$. 
This analysis is independent of the estimation procedure.
To determine the optimal core allocation policy, we solve Bellman's optimality equation, which requires modelling the system as a {Markov decision process (MDP)}.

Due to the Markovian nature of the arrival and service-time distributions, the memoryless property ensures that the pair $\big(n^{(1)}, n^{(2)}\big)$, where $n^{(i)}$ is the number of class-$i$ jobs in the system is sufficient to fully describe the system at any given time. 
This formulation closely follows the MDP model outlined in \cite{berg2017}. 
In particular, the state space $\mathcal{S}$ is given by $\mathcal{S} = \left\{(n^{(1)}, n^{(2)}) \ \big\vert \ 0 \leq n^{(1)}, n^{(2)} \leq n_{\max} \right\}$. 
Here, it is worth recalling that $n_{\max}$  as defined in Assumption \ref{ass: max jobs} is the maximum permissible number of jobs of each class in the system. 
The action space of the MDP is given by $\mathcal{A} = [0,1]$, and dictates the fraction of the $c$ cores allocated to class-1 jobs. The remaining fraction $1-a$ is then allocated to class-$2$ jobs. The action $a$ dictates the departure rate of both job classes, as explained in Equation \eqref{eqn:departure_rate}.
The instantaneous cost is $\psi\big(n^{(1)}, n^{(2)}\big) = n^{(1)} + n^{(2)}$, corresponding to the total number of jobs in the system. 

\subsection{Transition rates}

Under a stationary policy $\pi$, the system evolves as a CTMC with generator $Q_{\pi}$. The non-zero transition rates are
\begin{align}
    Q_{\pi}\Big(\big(n^{(1)} +1,n^{(2)}\big) \ \Big\vert \big(n^{(1)}, n^{(2)}\big) \Big) &= \lambda \alpha\,  \mathbbm{1}\left\{n^{(1)} < n_{\max}\right\}.\label{eqn:transition_prob:1}
    \\
    Q_{\pi}\Big(\big(n^{(1)},n^{(2)}+1\big) \ \Big\vert \big(n^{(1)}, n^{(2)}\big) \Big) &= \lambda (1-\alpha) \, \mathbbm{1}\left\{n^{(2)} < n_{\max}\right\}.\label{eqn:transition_prob:2}
    \\
    Q_{\pi}\Big(\big(n^{(1)}-1,n^{(2)}\big) \ \Big\vert \big(n^{(1)}, n^{(2)}\big) \Big) &= n^{(1)} \mu \, s_1\left(\frac{c \, \pi(n^{(1)},n^{(2)})}{n^{(1)}};p_1\right) \, \mathbbm{1}\left\{n^{(1)} > 0\right\}\label{eqn:transition_prob:3}
    \\
    Q_{\pi}\Big(\big(n^{(1)},n^{(2)}-1\big) \ \Big\vert \big(n^{(1)}, n^{(2)}\big) \Big) &= n^{(2)} \mu \, s_2\left(\frac{c\big(1-\pi(n^{(1)},n^{(2)})\big)}{n^{(2)}};p_2\right) \mathbbm{1}\left\{n^{(2)} > 0\right\}\label{eqn:transition_prob:4} 
\end{align}
Equations \eqref{eqn:transition_prob:1}, \eqref{eqn:transition_prob:2} follow because class-1 jobs arrive at rate $\lambda\alpha$ whenever $n^{(1)} < n_{\max}$ and class-2 jobs arrive at rate $\lambda(1-\alpha)$ whenever $n^{(2)} < n_{\max}$.
Equations \eqref{eqn:transition_prob:3}, \eqref{eqn:transition_prob:4} on the other hand follow from \eqref{eqn:departure_rate}. 
All remaining transition rates are zero.

\subsection{Uniformization}

We now perform uniformization, which is a standard technique (cf.\ \cite{puterman1994}) to make continuous-time Markov processes amenable for an analysis through their discrete-time counterparts.
Let
\begin{align*}
 \nu = \max_{\big(n^{(1)}, n^{(2)}\big) \in  S}&\bigg( \ Q_{\pi}\Big(\big(n^{(1)} +1,n^{(2)}\big) \ \Big\vert \big(n^{(1)}, n^{(2)}\big) \Big) + Q_{\pi}\Big(\big(n^{(1)},n^{(2)}+1\big) \ \Big\vert \big(n^{(1)}, n^{(2)}\big) \Big) \notag 
    \\
    &+ Q_{\pi}\Big(\big(n^{(1)}-1,n^{(2)}\big) \ \Big\vert \big(n^{(1)}, n^{(2)}\big) \Big)+ Q_{\pi}\Big(\big(n^{(1)},n^{(2)}-1\big) \ \Big\vert \big(n^{(1)}, n^{(2)}\big) \Big) \ \bigg). 
\end{align*}
Then $\nu$ is clearly a tight upper bound on the total outgoing transition rate for any state in $\mathcal{S}$.
The transition probabilities of the uniformized MDP are
\begin{align*}
    \prob_{\pi}\Big(\big(\tilde{n}^{(1)}, \tilde{n}^{(2)}\big) \Big\vert \big(n^{(1)}, n^{(2)}\big)\Big) = \frac{1}{\nu} Q_{\pi}\Big(\big(\tilde{n}^{(1)}, \tilde{n}^{(2)}\big) \Big\vert \big(n^{(1)}, n^{(2)}\big)\Big)
\end{align*}
and
\begin{align}\label{eqn:transition_prob:5}
    \prob_{\pi}\Big(\big(n^{(1)},n^{(2)}\big) \ &\Big\vert \big(n^{(1)}, n^{(2)}\big) \Big) \notag
    \\
    = 1&-\prob_{\pi}\Big(\big(n^{(1)} +1,n^{(2)}\big) \ \Big\vert \big(n^{(1)}, n^{(2)}\big) \Big)- \prob_{\pi}\Big(\big(n^{(1)},n^{(2)}+1\big) \ \Big\vert \big(n^{(1)}, n^{(2)}\big) \Big) \notag 
    \\
    &-\prob_{\pi}\Big(\big(n^{(1)}-1,n^{(2)}\big) \ \Big\vert \big(n^{(1)}, n^{(2)}\big) \Big)-\prob_{\pi}\Big(\big(n^{(1)},n^{(2)}-1\big) \ \Big\vert \big(n^{(1)}, n^{(2)}\big) \Big). 
\end{align}
Let
\begin{align*}
 s_{\max} = \argmax_{\big(n^{(1)}, n^{(2)}\big) \in  S}&\bigg( \ Q_{\pi}\Big(\big(n^{(1)} +1,n^{(2)}\big) \ \Big\vert \big(n^{(1)}, n^{(2)}\big) \Big) + Q_{\pi}\Big(\big(n^{(1)},n^{(2)}+1\big) \ \Big\vert \big(n^{(1)}, n^{(2)}\big) \Big) \notag 
    \\
    &+ Q_{\pi}\Big(\big(n^{(1)}-1,n^{(2)}\big) \ \Big\vert \big(n^{(1)}, n^{(2)}\big) \Big)+ Q_{\pi}\Big(\big(n^{(1)},n^{(2)}-1\big) \ \Big\vert \big(n^{(1)}, n^{(2)}\big) \Big) \ \bigg). 
\end{align*}
Then, clearly, $\prob_{\pi}\big(s_{\max} \big\vert s_{\max}\big) = 0$. 


By construction, the continuous-time Markov chain with state space $\mathcal{S}$ and transition rates $Q_\pi(\cdot | \cdot)$ now has the same stationary distribution as the discrete-time Markov chain with state space $\mathcal{S}$ and transition probabilities $\prob_{\pi}(\cdot \vert \cdot)$. 
As a result, the optimal policy for the continuous-time MDP determined by $\mathcal{S}$, $\mathcal{A}$, $\psi(\cdot, \cdot)$ and the policy-specific transition rates given in Equations \eqref{eqn:transition_prob:1}--\eqref{eqn:transition_prob:4} is identical to the optimal policy for the discrete-time MDP given by $\mathcal{S}$, $\mathcal{A}$, $\psi(\cdot, \cdot)$ and policy-specific one-step transition probabilities $\prob_{\pi}(\cdot \vert \cdot)$. Since the latter is easier to compute, we now proceed with finding the latter. We do so by studying the associated Bellman equation.

\subsection{Bellman equation}

Let \( J^{*} \) denote the average cost under the optimal policy. We adopt the average-cost formulation of the MDP, which is natural in our context, as the system operates continuously over an infinite horizon, with the objective of minimizing the long-run average number of jobs in the system. A discounted formulation could also be considered, in which case the estimation procedure from the previous section would remain applicable.

The optimal value function \( V^*(n^{(1)}, n^{(2)}) \) satisfies the Bellman equation:
\begin{align*}
    V^*\big(n^{(1)}, n^{(2)}\big) = \Big(\psi\big(n^{(1)}, n^{(2)}\big) - J^*\Big) + \sum_{\big(\tilde{n}^{(1)}, \tilde{n}^{(2)}\big)} \prob_{\pi^*}\Big(\big(\tilde{n}^{(1)},\tilde{n}^{(2)}\big) \ \Big\vert \big(n^{(1)}, n^{(2)}\big) \Big) V^*\big(\tilde{n}^{(1)}, \tilde{n}^{(2)}\big)
\end{align*}
Alternatively, this can be expressed as
\begin{align*}
    J^* + V^{*}\big(n^{(1)}, n^{(2)}\big) = A^{*}\big(n^{(1)}, n^{(2)}\big) + H^{*}\big(n^{(1)}, n^{(2)}\big)
\end{align*}
where, using Equations \eqref{eqn:transition_prob:1}--\eqref{eqn:transition_prob:5}, we define
\begin{align}\label{eqn: defn: A}
    A^{*}\big(n^{(1)}, n^{(2)}\big) = &\psi\big(n^{(1)}, n^{(2)}\big) + \lambda \alpha \mathbbm{1}\big\{n^{(1)} < n_{\max}\big\} \Big(V^{*}\big(n^{(1)}+1, n^{(2)}\big) - V^{*}\big(n^{(1)}, n^{(2)}\big)\Big) \notag
    \\
    &+ \lambda \big(1-\alpha\big) \mathbbm{1}\big\{n^{(2)} < n_{\max}\big\} \Big(V^{*}\big(n^{(1)}, n^{(2)}+1\big) - V^{*}\big(n^{(1)}, n^{(2)}\big)\Big).
\end{align}
and
\begin{align} \label{eqn: defn: H}
    H^{*}\big(n^{(1)}, n^{(2)}\big) = &V^{*}\big(n^{(1)}, n^{(2)}\big) + \inf_{a \in \mathcal{A}} \Bigg(n^{(1)} \, \mu \, s_1\bigg(\frac{ca}{n^{(1)}}; p_1\bigg) \Big(V^{*}\big(n^{(1)}-1, n^{(2)}\big) - V^{*}\big(n^{(1)}, n^{(2)}\big)\Big) \notag
    \\
    &+ n^{(2)} \, \mu \, s_2\bigg(\frac{c(1-a)}{n^{(2)}}; p_2\bigg) \Big(V^{*}\big(n^{(1)}, n^{(2)}-1\big) - V^{*}\big(n^{(1)}, n^{(2)}\big)\Big) \Bigg).
\end{align}

In the sequel, we find the solution of the Bellman equation by employing the method of value iteration. To this end, let $\{V_n(.)\}_{n \geq 0}$ be a sequence of functions which we use to iteratively approximate $V^{*}(.)$. 
Among the standard solution methods for finite-state MDPs, such as policy iteration and linear programming formulations, we employ value iteration due to its simplicity of implementation and computational efficiency for the problem sizes considered here. 
The procedure is described in Algorithm \ref{alg: value iteration} below.
\begin{algorithm}[H]
    \caption{Value iteration to solve for optimal core allocation policy}
    \label{alg: value iteration}
    \begin{algorithmic}[1]
        \State Given estimators for $(p_1, p_2)$, $\lambda, \mu, c, n_{\max}$
        \State Initialize: $V_0\big(n^{(1)},n^{(2)}\big)=0$ all $(n^{(1)},n^{(2)}) \in \mathcal S$;
        \State Set $k \gets 0$;
        \Repeat
        \State Set $V_{k+1}(0,0)=0$;
        \For{each $\big(n^{(1)},n^{(2)}\big)\in\mathcal S\setminus\{(0,0)\}$}
            \State Evaluate $A_k\big(n^{(1)}, n^{(2)}\big)$ using Equation \eqref{eqn: defn: A};
            \State Evaluate $H_k\big(n^{(1)}, n^{(2)}\big)$ using Equation \eqref{eqn: defn: H};
            \State $V_{k+1}\big(n^{(1)},n^{(2)}\big) \gets A_k\big(n^{(1)},n^{(2)}\big) + H_k\big(n^{(1)},n^{(2)}\big)$;
        \EndFor
        \State $k \gets k+1$;
        \Until{$|V_{k}-V_{k-1}|_\infty < \epsilon$};
    \end{algorithmic}
\end{algorithm}


\section{Learning Algorithm and Numerical Experiments} \label{section: numerical experiments}

In Section \ref{section: estimation procedure}, we established that, under a fixed core-allocation policy, the maximum likelihood estimators of the speed-up parameters converge almost surely to their true values as the amount of observed departure data increases. In Section \ref{section: optimal policy}, we showed how an approximately optimal core-allocation policy can be computed from a given pair of parameter estimates $(p_1,p_2)$.

In this section, we combine these estimation and optimization components into two learning algorithms, namely Algorithms \ref{alg: detailed learning algorithm} and \ref{alg: improved detailed learning algorithm}. Both algorithms follow the same iterative principle: the system is operated under the current core-allocation policy to generate data, the resulting observations are used to update the parameter estimates, and the updated estimates are subsequently used to compute an improved policy.
Algorithm \ref{alg: detailed learning algorithm} is presented in Section \ref{subsection: algorithm 1}, while Algorithm \ref{alg: improved detailed learning algorithm} is presented in Section \ref{subsection: algorithm 2}.

\begin{remark}
Algorithms \ref{alg: detailed learning algorithm} and \ref{alg: improved detailed learning algorithm} differ only in the data used to compute the parameter estimates at each update step. 
While Algorithm \ref{alg: detailed learning algorithm} uses only the departure data collected during the most recent observation period, Algorithm \ref{alg: improved detailed learning algorithm} uses all departure data accumulated since time $0$.
The strong-consistency result established in Section \ref{section: estimation procedure} assumes a fixed core-allocation policy and therefore does not directly apply to Algorithm \ref{alg: improved detailed learning algorithm}, since the policy is updated over time. Nevertheless, our numerical experiments indicate that the parameter estimates produced by both algorithms converge in practice.

Because Algorithm \ref{alg: improved detailed learning algorithm} exploits all available observations, it typically achieves more accurate parameter estimates and faster convergence than Algorithm \ref{alg: detailed learning algorithm}. However, this advantage comes at the cost of reduced adaptability. If the underlying speed-up parameters change over time, as may occur in practice due to evolving workloads or hardware characteristics, then older observations become less informative. In such settings, the use of historical data may bias the estimates toward outdated parameter values. By relying only on recent observations, Algorithm \ref{alg: detailed learning algorithm} is potentially better suited to tracking time-varying speed-up parameters.
\hfill $\spadesuit$
\end{remark}

Table \ref{table: examples} discusses 4 experiments.
Through these, we demonstrate the efficacy of Algorithms \ref{alg: detailed learning algorithm} and \ref{alg: improved detailed learning algorithm} in Sections \ref{subsection: algorithm 1} and \ref{subsection: algorithm 2}.

\begin{table}[h!]
    \centering
    \begin{tabular}{lcccc}
        \toprule
         & Example 1 & Example 2 & Example 3 & Example 4 \\
        \midrule
        Algorithm & \ref{alg: detailed learning algorithm} & \ref{alg: detailed learning algorithm} & \ref{alg: improved detailed learning algorithm} & \ref{alg: improved detailed learning algorithm} \\
        $c$ & 30 & 10 & 20 & 30 \\
        $\lambda$ & 4 & 2 & 2 & 4 \\
        $\mu$ & 2.5 & 1.5 & 1 & 2.5 \\
        $\alpha$ & 0.35 & 0.6 & 0.65 & 0.35 \\
        $p_1$ & 0.3 & 0.4 & 0.4 & 0.3 \\
        $p_2$ & 0.8 & 0.85 & 0.7 & 0.8 \\
        $N_k$ & $100 k^{0.75}$ & $200 k^{0.75}$ & 100 & 100 \\
        $n_{\text{steps}}$ & 400 & 400 & 100 & 100 \\
        \bottomrule
    \end{tabular}
    \caption{Four examples and their parameter settings.}
    \label{table: examples}
\end{table}

Let us discuss the various rows.
The first row tells us whether Algorithm \ref{alg: detailed learning algorithm} or Algorithm \ref{alg: improved detailed learning algorithm} is used to test this example on.
The {parameters} $c, \lambda, \mu, \alpha, p_1, p_2$ carry the same meaning as defined in Section \ref{section: model description}.
In all examples, the speed-up follows Amdahl's law, i.e., Example \ref{example: speed up: amdahl law}.
The number of departures observed in the $k$-th {iteration} of the algorithm for estimating $(p_1, p_2)$ is denoted by $N_k$.
Finally, $n_{\text{steps}}$ tells us the number of iterations we  run this algorithm for.

\subsection{Learning Algorithm 1a} \label{subsection: algorithm 1}

In Section \ref{subsection: model description: learning algorithm}, we announced that we would explain the construction of the sequence $\big\{ \widetilde{T}_k \big\}_{k \geq 1}$. We do so below.

Let $g:\mathbb{N} \to \mathbb{R}_{+}$ be a strictly increasing function, and let $\{N_k\}_{k \geq 1}$, a deterministic sequence, representing the number of observed departures in each of the epochs chosen such that $\frac{N_k}{N_1} = \Theta(g(k))$.
For example, one could choose $g(k) = \sqrt{k}$, and $N_k = N_1\big\lceil\sqrt{k}\big\rceil$.
Then, the window $\big\{\widetilde{T}_k\big\}_{k \geq 1}$ is empirically chosen such that there are exactly $N_k$ departures in $\big(\widetilde{T}_{k-1}, \widetilde{T}_k\big]$.
In particular, we choose $\widetilde{T}_k$ to be the time of the $\widetilde{N}_k$-th departure, where $ \widetilde{N}_k := \sum_{1 \leq j \leq k} N_j$.
During iteration $k$, the system is operated on the time interval $\big[\widetilde{T}_{k-1}, \widetilde{T}_k\big)$ under a fixed core allocation policy $\pi_k$. 
Using the sample path data collected in this interval, we compute maximum likelihood estimators $\big(\widehat{p}_{1,k}, \widehat{p}_{2,k}\big)$ for the speed-up parameters using the estimation procedure described in Section \ref{section: estimation procedure}. 
These parameter estimates are then used in the dynamic programming procedure of Section \ref{section: optimal policy} to compute an updated core allocation policy $\pi_{k+1}$.
The resulting policy is used during the next epoch, generating new data for subsequent estimation. 
Increasing epoch lengths ensure that the parameter estimates become progressively more accurate, while allowing the policy to adapt as additional information about the system becomes available.
The complete learning procedure is summarized in Algorithm \ref{alg: detailed learning algorithm}.

\begin{algorithm}[H]
    \renewcommand{\thealgorithm}{1a}
    \caption{Learning algorithm}
    \label{alg: detailed learning algorithm}
    \begin{algorithmic}[1]
        \State Given: $\lambda, \mu, c, s(\cdot;p), g(\cdot)$;
        \State From $g(\cdot)$, construct sequence $\big\{N_k\big\}_{k \geq 1}$;
        \State Given: Initialize policy $\pi_1$;
        \For{$k = 1,2,\dots, K$}
        \State Run the system under policy $\pi_k$ during $\big[\widetilde{T}_{k-1}, \widetilde{T}_k\big)$ consisting of $N_k$ departures;
        \State Collect system sample path data in that interval;
        \State Compute the MLE $\big(\widehat{p}_{1,k},\widehat{p}_{2,k}\big)$;
        \State Compute $\pi_{k+1}$ using procedure of Section \ref{section: optimal policy} with parameters $\big(\widehat{p}_{1,k},\widehat{p}_{2,k}\big)$;
        \EndFor
        \State Output: $\pi_{K+1}$;
    \end{algorithmic}
\end{algorithm}

Figures \ref{fig:algo_1:example_1:combined} and \ref{fig:algo_1:example_2:combined} illustrate the performance of Algorithm \ref{alg: detailed learning algorithm} for Examples 1 and 2, respectively, as summarized in Table \ref{table: examples}. In both cases, we observe that the estimators $\widehat{p}_{2,k}$ converge to $p_2$ significantly faster than the estimators $\widehat{p}_{1,k}$ converge to $p_1$. This phenomenon persists even when $\alpha > 0.5$, that is, even in regimes where class-1 jobs constitute the majority of the observed data, as in Example 3.

The underlying reason for this behavior appears to be a quasi-identifiability issue. As further illustrated in Figure \ref{fig: Speed up 1} and Example \eqref{example: speed up: 2}, lower values of $p$ are associated with only modest gains in speed-up, whereas higher values of $p$ lead to substantially more pronounced improvements. Consequently, class-2 jobs exhibit a much stronger response to changes in core allocation, while class-1 jobs show comparatively muted sensitivity. This asymmetry in responsiveness makes the estimation of $p_1$ inherently more challenging than that of $p_2$.

\begin{figure}[h]
    \centering
    
    \begin{subfigure}{\textwidth}
        \centering
        \includegraphics[width=0.85\textwidth]{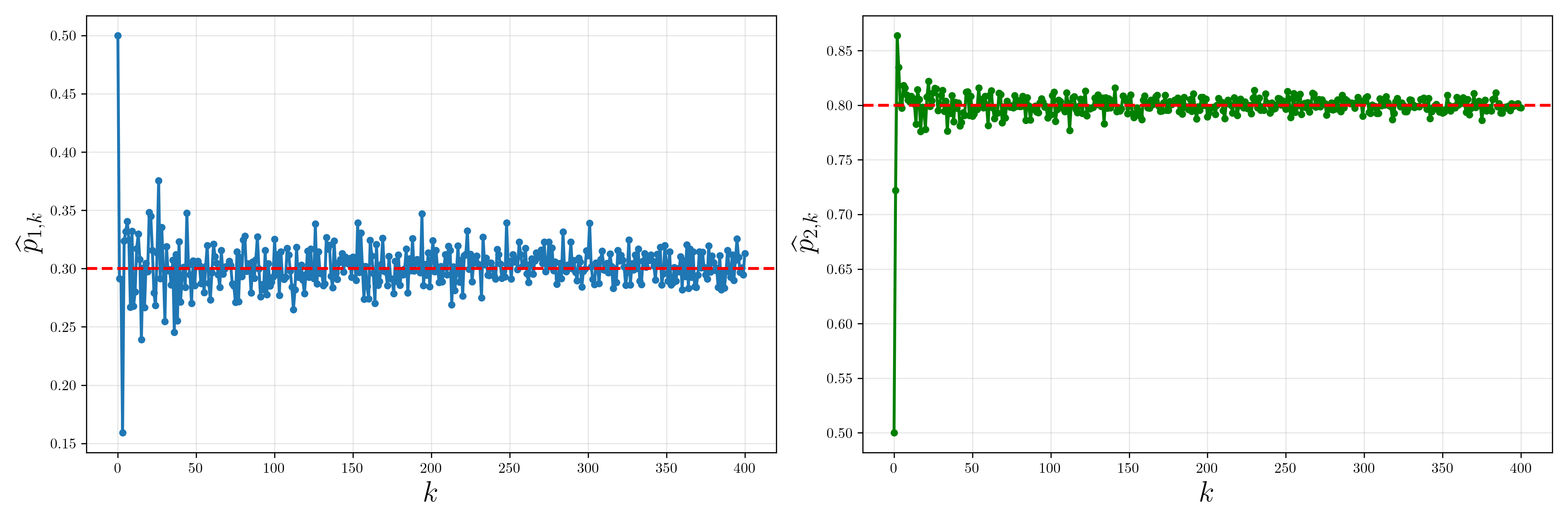}
        \caption{Plots of estimators $\widehat{p}_{1,k}$ and $\widehat{p}_{2,k}$ constructed after every iteration of the algorithm.}
        \label{fig:algo_1:example_1:estimator_convergence}
    \end{subfigure}

    \vspace{0.5cm}

    \begin{subfigure}{\textwidth}
        \centering
        \includegraphics[width=0.85\textwidth]{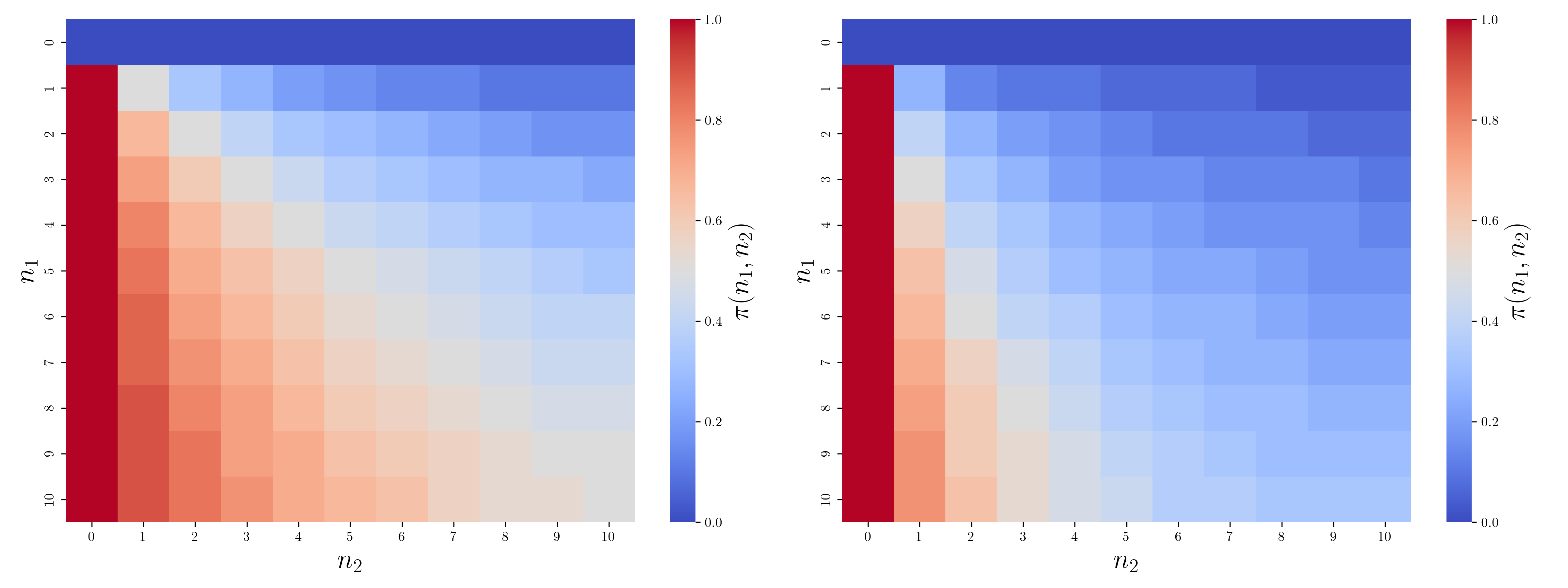}
        \caption{Heat maps of the policies at the start (left) and end (right).}
        \label{fig:algo_1:example_1:policy_evolution}
    \end{subfigure}

    \caption{Visualizations for Example 1}
    \label{fig:algo_1:example_1:combined}
\end{figure}

\begin{figure}[h]
    \centering
    
    \begin{subfigure}{\textwidth}
        \centering
        \includegraphics[width=0.85\textwidth]{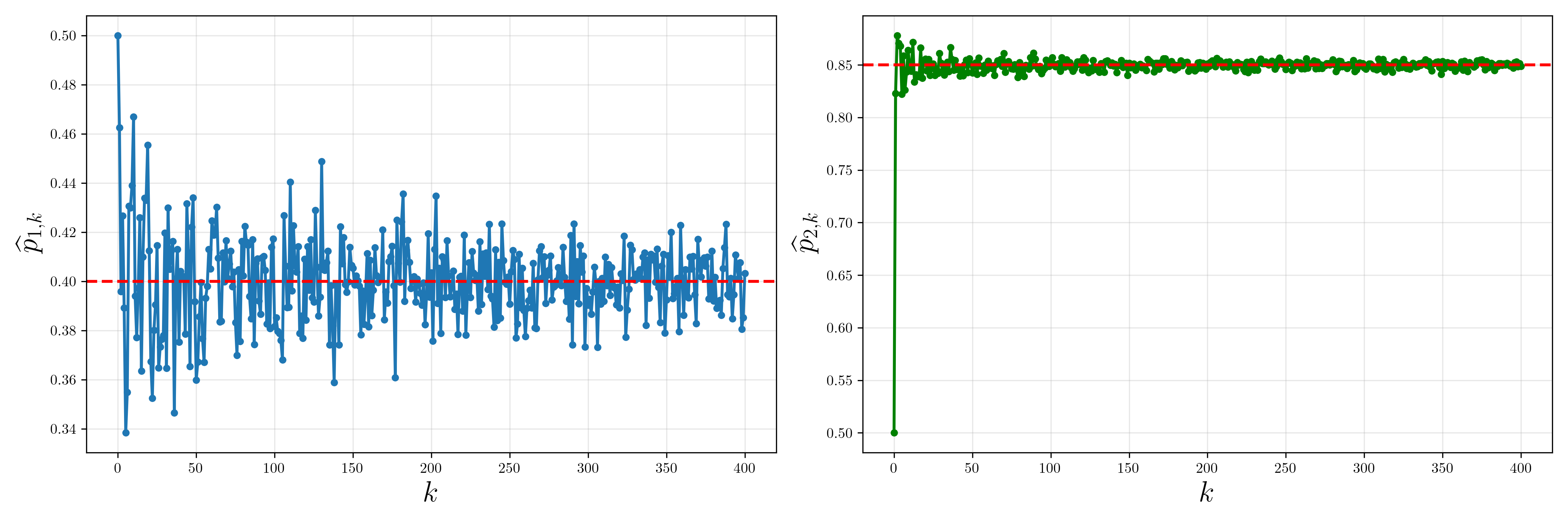}
        \caption{Plots of estimators $\widehat{p}_{1,k}$ and $\widehat{p}_{2,k}$ constructed after every iteration of the algorithm.}
        \label{fig:algo_1:example_2:estimator_convergence}
    \end{subfigure}

    \vspace{0.5cm}

    \begin{subfigure}{\textwidth}
        \centering
        \includegraphics[width=0.85\textwidth]{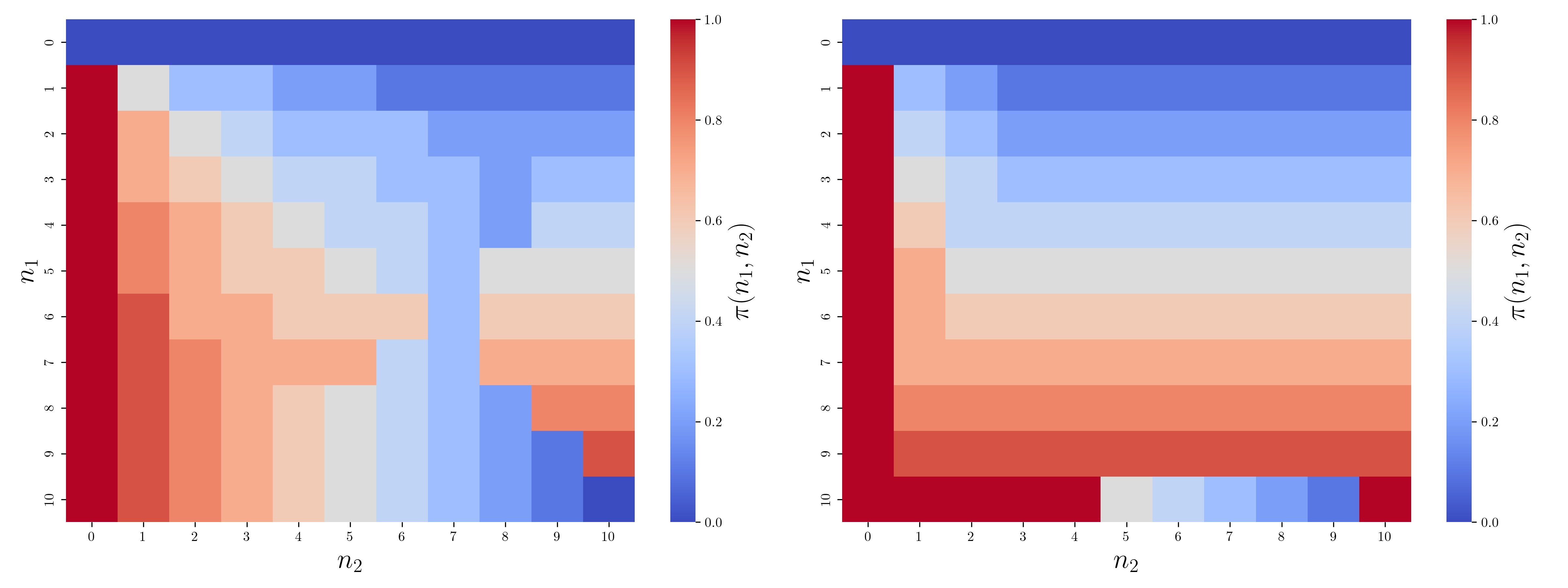}
        \caption{Heatmaps of the policies at the start (left) and end (right).}
        \label{fig:algo_1:example_2:policy_evolution}
    \end{subfigure}

    \caption{Visualizations for Example 2}
    \label{fig:algo_1:example_2:combined}
\end{figure}

We now investigate the performance of Algorithm \ref{alg: detailed learning algorithm} in a non-stationary setting, in which the parallelizability characteristics of an entire job class evolve over time. 
To capture this phenomenon, we consider an experiment in which the speed-up parameter of one job class changes at deterministic but unknown time epochs during the simulation horizon. The learning algorithm is not informed of these changes and must therefore adapt solely on the basis of newly observed inter-departure times. The aim of this experiment is to assess whether the estimation procedure is able to track the evolving degree of parallelizability, and whether the resulting allocation policy adjusts accordingly.
Figure \ref{fig:algo_1:example:changing_speed_up_parameter} illustrates the successful dynamic convergence of Algorithm \ref{alg: detailed learning algorithm} in this setting. The corresponding schedule of parameter changes is detailed in Table \ref{tab:parameter_schedule}, which specifies both the timing and magnitude of the shifts in the speed-up parameters.

\begin{figure}[h]
    \centering
    
    \includegraphics[width=0.85\textwidth]{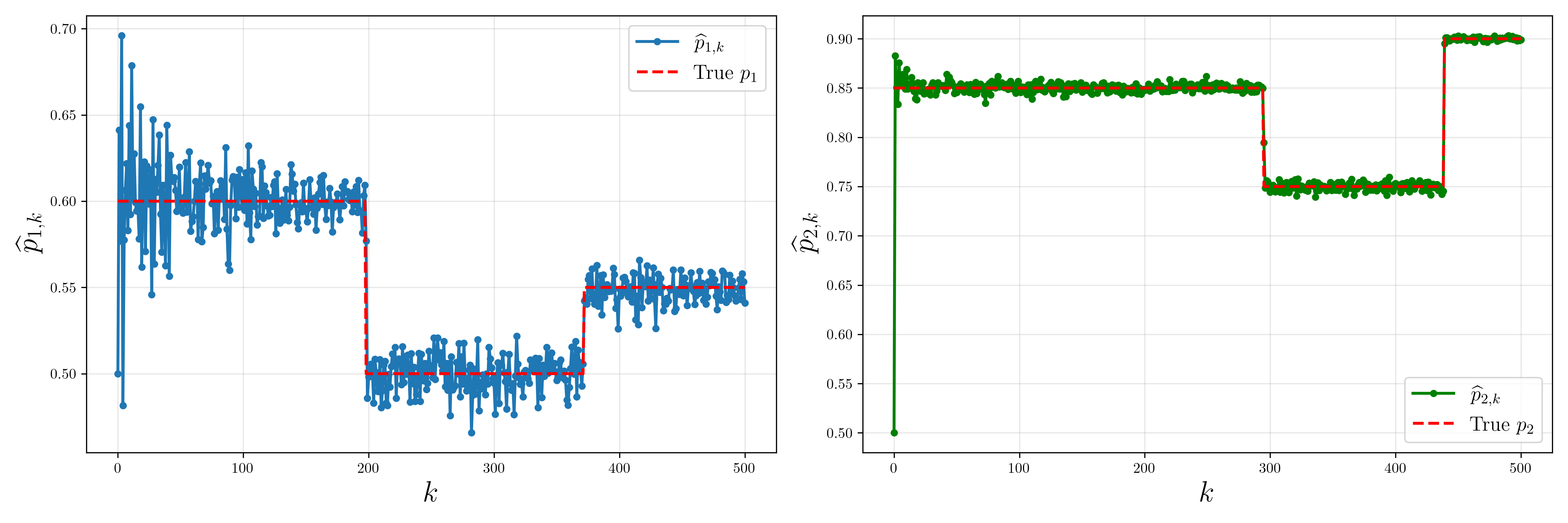}
        
    \caption{A demonstration of the algorithm tracking a changing speed-up parameter.}
    \label{fig:algo_1:example:changing_speed_up_parameter}
\end{figure}

\begin{table}[htbp]
\centering
\caption{Parameter change schedule and corresponding simulation window information.}
\label{tab:parameter_schedule}
\begin{tabular}{ccccc}
\hline
Change point ($\times 10^5$) & $(p_1,p_2)$ & Iteration & \% through window \\
\hline

$t = 0$
& $(0.6,\,0.85)$
& 1
& $0\%$ \\

$t = 6$ 
& $(0.50,\,0.85)$ 
& 198 
& $87.2\%$ \\

$t = 12$ 
& $(0.50,\,0.75)$ 
& 295 
& $44.2\%$ \\

$t = 18$ 
& $(0.55,\,0.75)$ 
& 372 
& $46.0\%$ \\

$t = 24$ 
& $(0.55,\,0.90)$ 
& 439 
& $2.1\%$ \\

\hline
\end{tabular}
\end{table}

The first column specifies the time points at which the speed-up parameters change; in this experiment, these change points are chosen to be equidistant over the simulation horizon. The second column lists the corresponding values of the speed-up parameters after each change. For instance, the parameters are $(0.6, 0.85)$ on the interval $\big(0, 6\times10^5\big)$, and $(0.5, 0.85)$ on $\big(6\times10^5, 12\times10^5\big)$. The third column indicates the iteration number at which each parameter switch occurs. Finally, the fourth column reports the fraction of the corresponding time window that has elapsed at the moment of the change.
The remaining system parameters are given by $\lambda = 2$, $\mu = 1.5$, $c = 10$, $\alpha = 0.7$, and $N_k = 200\, k^{0.75}$.

\subsection{Learning Algorithm 1b} \label{subsection: algorithm 2}

Recall that Theorem \ref{theorem:strong consistency} establishes strong consistency of the maximum likelihood estimates as size of the data increases, under the assumption that the core allocation policy is fixed.
In practice, we find that this assumption is quite restrictive, and strong consistency is observed also when the policy changes during the data generation process.
Leveraging this observation, in this section, we describe an alternative to Algorithm \ref{alg: improved detailed learning algorithm}, which leads to faster convergence when the speed-up parameter does not change with time.

\begin{algorithm}[H]
    \renewcommand{\thealgorithm}{1b}
    \caption{Alternative learning algorithm}
    \label{alg: improved detailed learning algorithm}
    \begin{algorithmic}[1]
        \State Given: $\lambda, \mu, c, s(\cdot;p)$;
        \State Given: Choice of $N$, the number of departures in each epoch;
        \State Initialize policy $\pi_1$;
        \For{$k = 1,2,\dots, n_{\text{steps}}$}
        \State Run the system under policy $\pi_k$ during $\Big[\widetilde{T}_{k-1}, \widetilde{T}_k\Big)$ consisting of $N$ departures;
        \State Collect system sample path data in the entire interval $\Big[0, \widetilde{T}_k\Big)$ in the vector $\bs Z_k$;
        \State Compute the MLE $\big(\widehat{p}_{1,k},\widehat{p}_{2,k}\big)$;
        \State Compute $\pi_{k+1}$ using procedure of Section \ref{section: optimal policy} with parameters $\big(\widehat{p}_{1,k},\widehat{p}_{2,k}\big)$;
        \EndFor
        \State Output: $\pi_{n_{\text{steps}}+1}$;
    \end{algorithmic}
\end{algorithm}

In this variant of the algorithm, we fix a choice $N$ of the number of departures that will be observed in every epoch, i.e., for all $k$, the sequence $\big\{\widetilde{T}_k\big\}_{k \geq 1}$ is such that there are precisely $N$ departures in $\big[\widetilde{T}_{k-1}, \widetilde{T}_k\big)$.
Finally, the sample path data in the interval $\big[0, \widetilde{T}_k\big)$ consisting of $Nk$ departures is collected in the vector $\bs Z_k$ and is used to obtain estimators $\big(\widehat{p}_{1,k+1}, \ \widehat{p}_{2,k+1}\big)$. 
Similar to Algorithm~1, we then solve the Bellman optimality equation corresponding to these estimators to obtain policy $\widehat{\pi}_{k+1}$.

We now demonstrate the efficacy of Algorithm \ref{alg: improved detailed learning algorithm}  through Examples 3 and 4, as visualized in Figures \ref{fig:algo_2:example_1:combined} and \ref{fig:algo_2:example_2:combined}, respectively.
As remarked in Section \ref{subsection: algorithm 1}, once again because of quasi-identifiability issues, the convergence of estimators $\widehat{p}_{2,k}$ to $p_2$ is faster than that of estimators $\widehat{p}_{1,k}$ to $p_1$.

\begin{figure}[h]
    \centering
    
    \begin{subfigure}{\textwidth}
        \centering
        \includegraphics[width=0.85\textwidth]{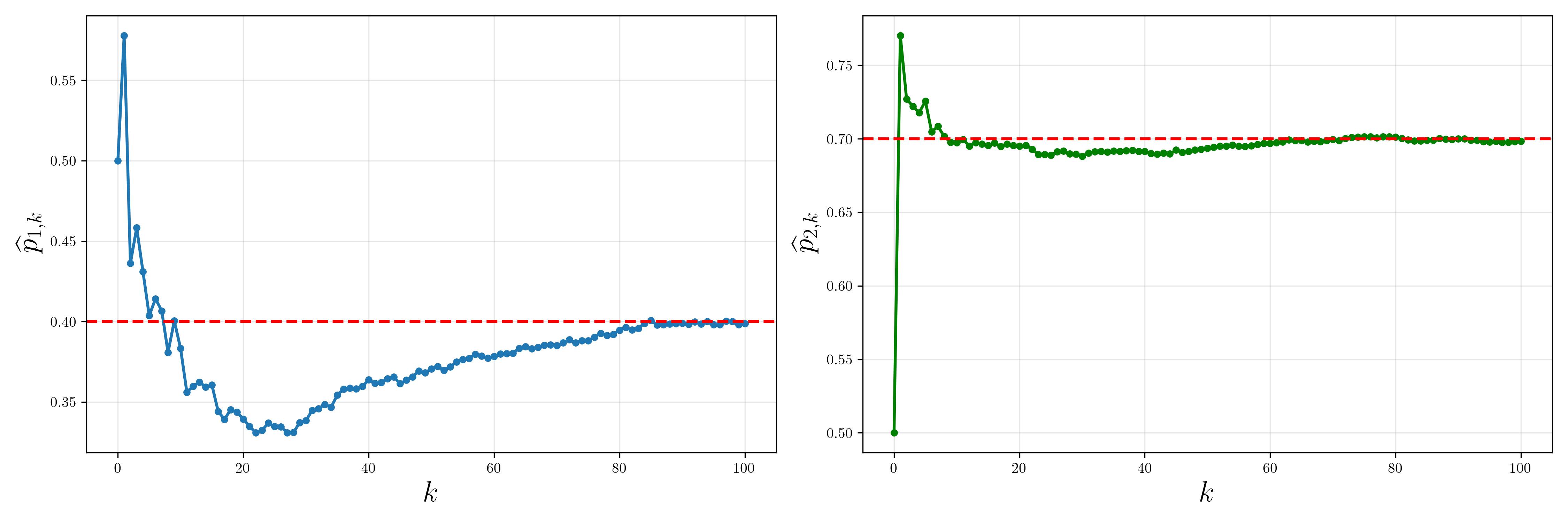}
        \caption{Plots of estimators $\widehat{p}_{1,k}$ and $\widehat{p}_{2,k}$ constructed after every iteration of the algorithm.}
        \label{fig:algo_2:example_1:estimator_convergence}
    \end{subfigure}

    \vspace{0.5cm}

    \begin{subfigure}{\textwidth}
        \centering
        \includegraphics[width=0.85\textwidth]{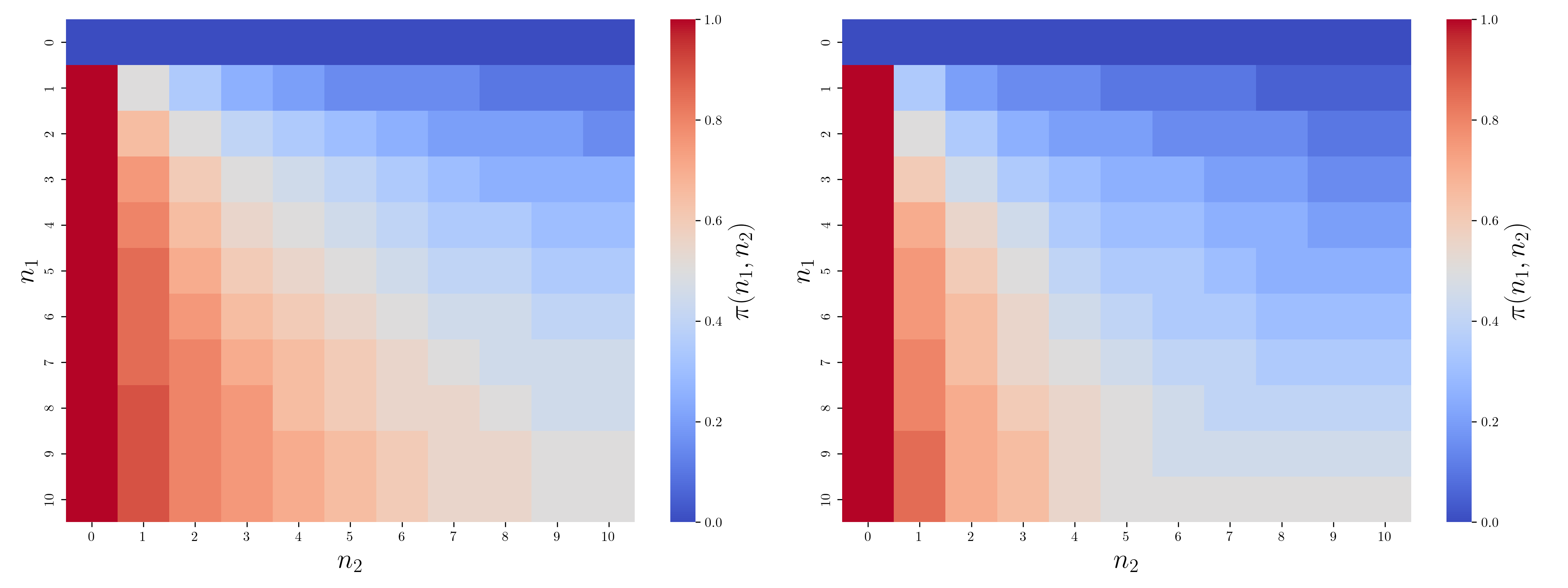}
        \caption{Heatmaps of the policies at the start (left) and end (right).}
        \label{fig:algo_2:example_1:policy_evolution}
    \end{subfigure}

    \caption{Visualizations for Example 3}
    \label{fig:algo_2:example_1:combined}
\end{figure}

\begin{figure}[h]
    \centering
    
    \begin{subfigure}{\textwidth}
        \centering
        \includegraphics[width=0.75\textwidth]{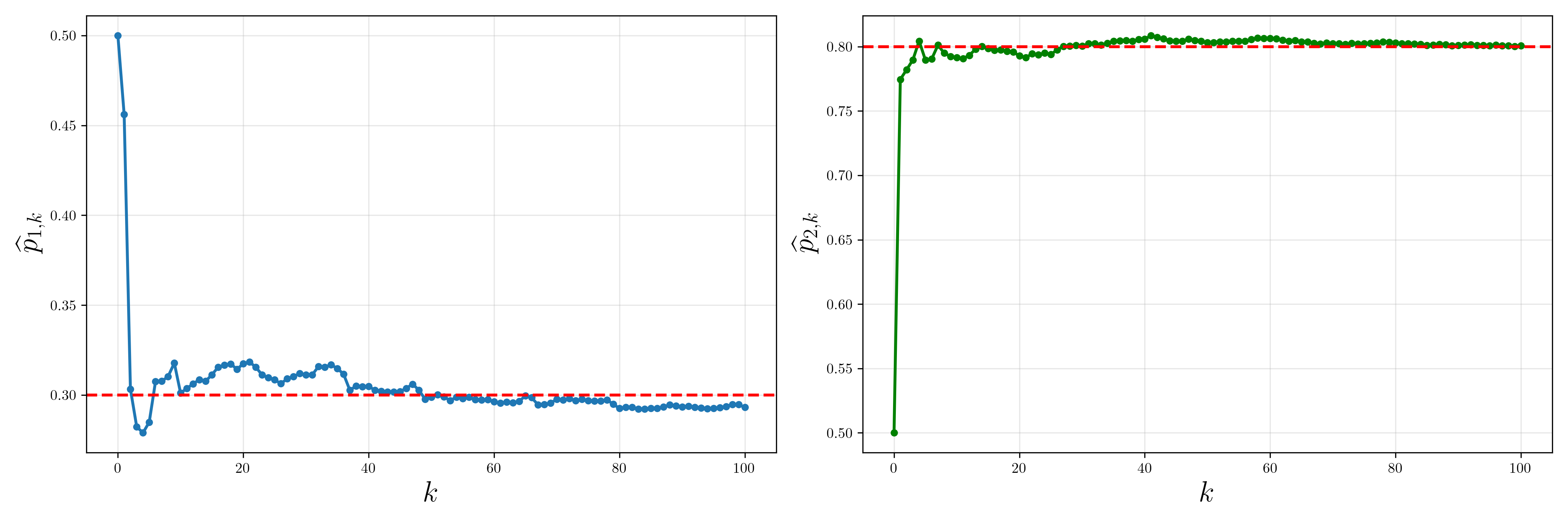}
        \caption{Plots of estimators $\widehat{p}_{1,k}$ and $\widehat{p}_{2,k}$ constructed after every iteration of the algorithm.}
        \label{fig:algo_2:example_2:estimator_convergence}
    \end{subfigure}

    \vspace{0.5cm}

    \begin{subfigure}{\textwidth}
        \centering
        \includegraphics[width=0.75\textwidth]{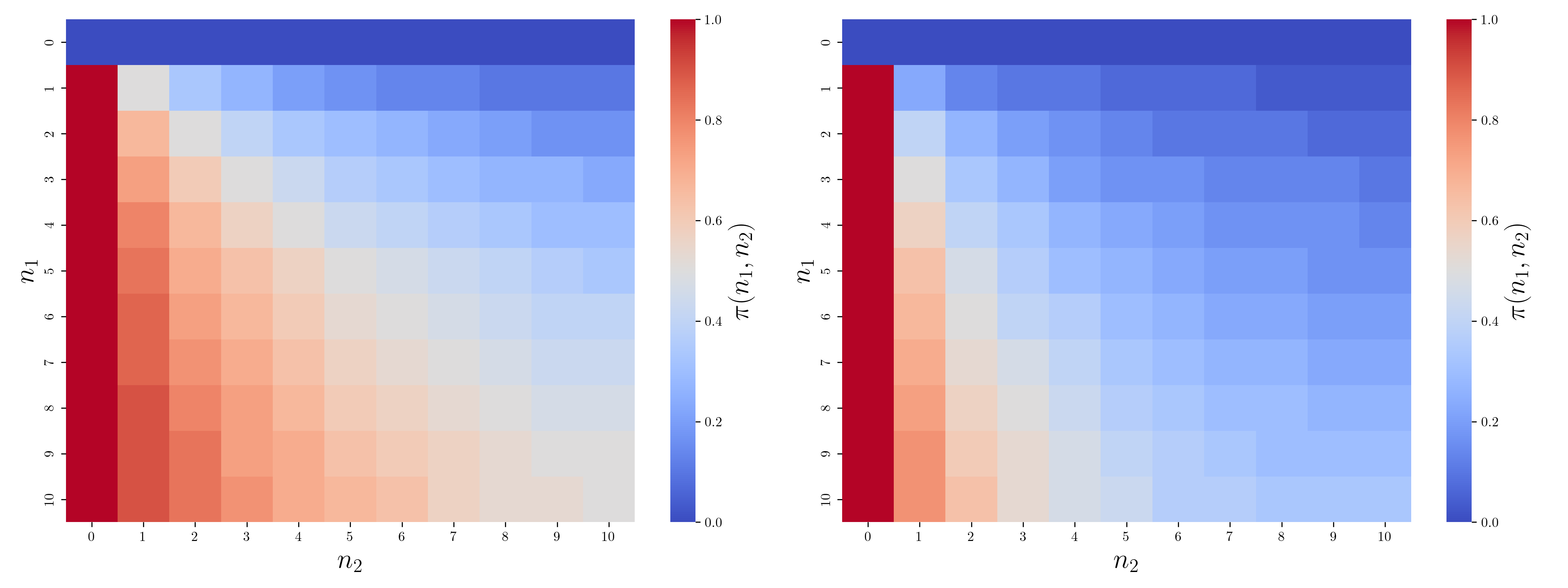}
        \caption{Heatmaps of the policies at the start (left) and end (right).}
        \label{fig:algo_2:example_2:policy_evolution}
    \end{subfigure}

    \caption{Visualizations for Example 4}
    \label{fig:algo_2:example_2:combined}
\end{figure}

\section{Conclusion}\label{section: conclusion}

In this paper, spurred in part by emerging applications such as the training of machine learning models, we studied the problem of dynamic resource allocation in a computing system with malleable jobs whose speed-up characteristics are not fully known a priori and may vary over time. Jobs belong to one of two observable classes, each characterized by an unknown degree of parallelizability. The objective is to learn a core-allocation policy that minimizes the long-run average sojourn time.

To address this problem, we developed an iterative learning-and-control framework that combines a statistical estimation step with an optimization step. At the core of our approach is a maximum likelihood estimation procedure based on observed inter-departure times, which enables online inference of the unknown speed-up parameters. These parameter estimates are then incorporated into a Markov decision process formulation to obtain updated resource-allocation policies. This yields a natural feedback loop between estimation and control: allocation decisions influence the observed data, while the resulting data are used to refine future decisions. We establish strong consistency of the proposed estimator, thereby providing theoretical support for the learning mechanism underlying the adaptive control scheme.

The framework developed in this paper extends naturally to systems with more than two job classes. While the estimation methodology remains essentially unchanged, the dimensionality of the control problem increases significantly, leading to substantial computational challenges in solving the associated Bellman optimality equations. This highlights an important tradeoff between model expressiveness and computational tractability in large-scale resource-allocation problems.

Finally, we outline several directions for future research. First, it would be of interest to study settings in which the speed-up characteristics evolve over time, reflecting non-stationary workloads or changing application behavior in modern cloud environments. Second, the current framework assumes exponential job sizes and Markovian dynamics; extending the analysis to more general service-time distributions is an important theoretical challenge. Finally, developing scalable approximation and reinforcement-learning-based methods for high-dimensional systems with many job classes remains a promising direction for future work.

\bibliographystyle{plain}

\end{document}